\newcommand{\mR}{\mathbb{R}}                    
\newcommand{\mC}{\mathbb{C}}                    
\newcommand{\mN}{\mathbb{N}}                    
\newcommand{\abs}[1]{\lvert #1 \rvert}          
\newcommand{\norm}[1]{\lVert #1 \rVert}         
\newcommand{\ol}[1]{\overline{#1}}
\newcommand{\mD}{\mathbb{D}}
\newcommand{\ehat}{\,\hat{\rule{0pt}{6pt}}\,}
\newcommand{\etilde}{\,\tilde{\rule{0pt}{6pt}}\,}
\newcommand{\im}{\mathrm{Im}}
\newcommand{\mF}{\mathscr{F}}
\newcommand{\p}{\partial}
\newcommand{\closure}[1]{\overline{#1}}
\newcommand{\mdiv}{\mathrm{div}}
\newcommand{\mA}{\mathcal{A}}
\newcommand{\eps}{\varepsilon}
\newtheorem{theorem}{Theorem}[section]
\newtheorem{lemma}[theorem]{Lemma}
\newtheorem{proposition}[theorem]{Proposition}
\newtheorem{corollary}[theorem]{Corollary}
\theoremstyle{definition}
\newtheorem*{definition}{Definition}
\newtheorem{example}[theorem]{Example}
\newtheorem{remark}[theorem]{Remark}
\newtheorem*{notation}{Notation}
\newtheorem*{motivation}{Motivation}
\numberwithin{equation}{section}
\begin{document}

\title{Applications of microlocal analysis to inverse problems}
\author{Mikko Salo}
\address{Department of Mathematics and Statistics, University of Jyv\"askyl\"a}
\email{mikko.j.salo@jyu.fi}

\begin{abstract}
These are lecture notes for a minicourse on applications of microlocal analysis in inverse problems, to be given in Helsinki and Shanghai in June 2019.
\end{abstract}

\maketitle








\section*{Preface}

Microlocal analysis originated in the 1950s, and by now it is a substantial mathematical theory with many different facets and applications. One might view microlocal analysis as 
\begin{itemize}
\item
a kind of "variable coefficient Fourier analysis" for solving variable coefficient PDEs; or 
\item 
as a theory of \emph{pseudodifferential operators} ($\Psi$DOs) and \emph{Fourier integral operators} (FIOs); or 
\item 
as a phase space (or time-frequency) approach to studying functions, operators and their singularities (\emph{wave front sets}).
\end{itemize}
$\Psi$DOs were introduced by Kohn and Nirenberg \cite{KohnNirenberg}, and FIOs and wave front sets were studied systematically by H\"ormander \cite{Hormander_FIO}. Much of the theory up to the early 1980s is summarized in the four volume treatise of H\"ormander \cite{Hormander}. There are remarkable applications of microlocal analysis and related ideas in many fields of mathematics. Classical examples include spectral theory and the Atiyah-Singer index theorem, and more recent examples include scattering theory, behavior of chaotic systems, inverse problems, and general relativity.

In this minicourse we will try to describe some classical applications of microlocal analysis to inverse problems, together with a very rough non-technical overview of relevant parts of microlocal analysis. In a nutshell, here are a few typical applications:
\begin{enumerate}
\item[1.] 
{\bf Computed tomography / X-ray transform:} the X-ray transform is an FIO, and under certain conditions its normal operator is an elliptic $\Psi$DO. Microlocal analysis can be used to predict which sharp features (singularities) of the image can be reconstructed in a stable way from limited data measurements. Microlocal analysis is also a powerful tool in the study of geodesic X-ray transforms related to seismic imaging applications.
\item[2.] 
{\bf Calder\'on problem / Electrical Impedance Tomography:} the boundary measurement map (Dirichlet-to-Neumann map) is a $\Psi$DO, and the boundary values of the conductivity as well as its derivatives can be computed from the symbol of this $\Psi$DO.
\item[3.] 
{\bf Gel'fand problem / seismic imaging:} the boundary measurement operator (hyperbolic Dirichlet-to-Neumann map) is an FIO, and the scattering relation of the sound speed as well as certain X-ray transforms of the coefficients can be computed from the canonical relation and the symbol of this FIO.
\end{enumerate}


These notes are organized as follows. In Section \ref{sec_psdo}, we will motivate the theory of $\Psi$DOs and discuss some of its properties without giving proofs. Section \ref{sec_fourier_integral_operators} will continue with a brief introduction to wave front sets and FIOs (again with no proofs). The rest of the notes is concerned with applications to inverse problems. Section \ref{section_ray_transform} considers the Radon transform in $\mR^2$ and its normal operator, and describes what kind of information about the singularities of $f$ can be stably recovered from the Radon transform. Sections \ref{sec_gelfand} and \ref{sec_calderon_boundary} discuss the Gel'fand and Calder\'on problems, and prove results related to recovering X-ray transforms or boundary determination. The treatment is motivated by $\Psi$DO and FIO theory, but we give direct and (in principle) elementary proofs based on a quasimode constructions. The results discussed in these notes are classical. For more recent results, we refer to the surveys \cite{IlmavirtaMonard, KrishnanQuinto, Lassas, Uhlmann2014}.

\subsection*{Notation}



We will use multi-index notation. Let $\mN_0 = \{ 0, 1, 2, \ldots \}$ be the set natural numbers. Then $\mN_0^n$ consists of all $n$-tuples $\alpha = (\alpha_1,\ldots,\alpha_n)$ where the $\alpha_j$ are nonnegative integers. Such an $n$-tuple $\alpha$ is called a \emph{multi-index}. We write $|\alpha| = \alpha_1 + \ldots + \alpha_n$ and 
$\xi^{\alpha} = \xi_1^{\alpha_1} \cdots \xi_n^{\alpha_n}$ for $\xi \in \mR^n$. For partial derivatives, we will write 
\[
\p_j = \frac{\p}{\p x_j}, \qquad D_j = \frac{1}{i} \p_j, \qquad D = \frac{1}{i} \nabla, \qquad D^{\alpha} = D_1^{\alpha_1} \cdots D_n^{\alpha_n}.
\end{equation*}

If $\Omega \subset \mR^n$ is a bounded domain with $C^{\infty}$ boundary, we denote by $C^{\infty}(\ol{\Omega})$ the set of infinitely differentiable functions in $\Omega$ whose all derivatives extend continuously to $\ol{\Omega}$. The space $C^{\infty}_c(\Omega)$ consist of $C^{\infty}$ functions having compact support in $\Omega$. The standard $L^2$ based Sobolev spaces are denoted by $H^s(\mR^n)$ with norm $\norm{f}_{H^s(\mR^n)} = \norm{(1+\abs{\xi}^2)^{s/2} \hat{f}}_{L^2(\mR^n)}$, with $\hat{f}$ denoting the Fourier transform. We also write $\norm{f}_{W^{k,\infty}} = \sum_{\abs{\alpha} \leq k} \norm{D^{\alpha} f}_{L^{\infty}}$. The notation $A \lesssim B$ means that $A \leq CB$ for some uniform (with respect to the relevant parameters) constant $C$. In general, all coefficients, boundaries etc are assumed to be $C^{\infty}$ for ease of presentation.



\section{Pseudodifferential operators} \label{sec_psdo}

In this minicourse we will try to give a very brief idea of the different points of view to microlocal analysis mentioned in the introduction (and repeated below), as 
\begin{enumerate}
\item[(1)] 
a kind of "variable coefficient Fourier analysis" for solving variable coefficient PDEs; or 
\item[(2)] 
a theory of $\Psi$DOs and FIOs; or 
\item[(3)]  
a phase space (or time-frequency) approach to studying functions, operators and their singularities (wave front sets).
\end{enumerate}

In this section we will discuss (1) and (2) in the context of $\Psi$DOs (we will continue with (2) and (3) in the context of FIOs in Section \ref{sec_fourier_integral_operators}). The treatment is mostly formal and we will give no proofs whatsoever. A complete reference for the results in this section is \cite[Section 18.1]{Hormander}.

\subsection{Constant coefficient PDEs} \label{subsec_constant_coefficient_pdes}

We recall the following facts about the Fourier transform (valid for sufficiently nice functions): 
\begin{enumerate}
\item[1.]
If $u$ is a function in $\mR^n$, its \emph{Fourier transform} $\hat{u} = \mF u$ is the function 
\[
\hat{u}(\xi) := \int_{\mR^n} e^{-ix \cdot \xi} u(x) \,dx, \qquad \xi \in \mR^n.
\]
\item[2.]
The Fourier transform converts derivatives to polynomials (this is why it is useful for solving PDEs):
\[
(D_j u)\ehat(\xi) = \xi_j \hat{u}(\xi).
\]
\item[3.]
A function $u$ can be recovered from $\hat{u}$ by the Fourier inversion formula $u = \mF^{-1}\{Ê\hat{u} \}$, where 
\[
\mF^{-1} v(x) := (2\pi)^{-n} \int_{\mR^n} e^{ix \cdot \xi} v(\xi) \,d\xi
\]
is the \emph{inverse Fourier transform}.
\end{enumerate}

As a motivating example, let us solve formally (i.e.\ without worrying about how to precisely justify each step) the equation 
\[
-\Delta u = f \text{ in $\mR^n$}.
\]
This is a constant coefficient PDE, and such equations can be studied with the help of the Fourier transform. We formally compute 
\begin{align}
-\Delta u = f &\ \Longleftrightarrow \ \abs{\xi}^2 \hat{u}(\xi) = \hat{f}(\xi) \notag \\
 &\ \Longleftrightarrow \ \hat{u}(\xi) = \frac{1}{\abs{\xi}^2} \hat{f}(\xi) \notag \\
 &\ \Longleftrightarrow \ u(x) = \mF^{-1} \left\{Ê\frac{1}{\abs{\xi}^2} \hat{f}(\xi) \right\} = (2\pi)^{-n} \int_{\mR^n} e^{ix \cdot \xi} \frac{1}{\abs{\xi}^2} \hat{f}(\xi) \,d\xi. \label{u_laplace_formal}
\end{align}
The same formal argument applies to a general constant coefficient PDE 
\[
a(D)u = f \text{ in $\mR^n$}, \qquad a(D) = \sum_{\abs{\alpha} \leq m} a_{\alpha} D^{\alpha},
\]
where $a_{\alpha} \in \mC$. Then $(a(D)u) \ehat(\xi) = a(\xi) \hat{u}(\xi)$ where $a(\xi) = \sum_{\abs{\alpha} \leq m} a_{\alpha} \xi^{\alpha}$ is the \emph{symbol} of $a(D)$. Moreover, one has 
\begin{equation} \label{adu_formula}
a(D) u(x) = \mF^{-1} \left\{Êa(\xi) \hat{u}(\xi) \right\} = (2\pi)^{-n} \int_{\mR^n} e^{ix \cdot \xi} a(\xi) \hat{f}(\xi) \,d\xi.
\end{equation}
The argument leading to \eqref{u_laplace_formal} gives a formal solution of $a(D)u = f$:
\begin{equation} \label{u_ad_solution_formula}
u(x) = \mF^{-1} \left\{Ê\frac{1}{a(\xi)} \hat{u}(\xi) \right\} = (2\pi)^{-n} \int_{\mR^n} e^{ix \cdot \xi} \frac{1}{a(\xi)} \hat{f}(\xi) \,d\xi.
\end{equation}
Thus formally $a(D)u = f$ can be solved by dividing by the symbol $a(\xi)$ on the Fourier side. Of course, to make this precise one would need to show that the division by $a(\xi)$ (which may have zeros) is somehow justified.

\subsection{Variable coefficient PDEs} \label{subsec_variable_coefficient_pde}

We now try to use a similar idea to solve the variable coefficient PDE 
\[
Au = f \text{ in $\mR^n$}, \qquad A = a(x,D) = \sum_{\abs{\alpha} \leq m} a_{\alpha}(x) D^{\alpha},
\]
where $a_{\alpha}(x) \in C^{\infty}(\mR^n)$ and $D^{\beta} a_{\alpha} \in L^{\infty}(\mR^n)$ for all multi-indices $\alpha, \beta$. Since the coefficients $a_{\alpha}$ depend on $x$, Fourier transforming the equation $Au = f$ is not immediately helpful. However, we can compute an analogue of \eqref{adu_formula}:
\begin{align}
Au(x) &= A \left[ \mF^{-1} \{Ê\hat{u}(\xi) \} \right] \notag \\
 &= \sum_{\abs{\alpha} \leq m} a_{\alpha}(x) D^{\alpha} \left[ (2\pi)^{-n} \int_{\mR^n} e^{ix \cdot \xi} \hat{u}(\xi) \,d\xi \right] \notag \\
 &= (2\pi)^{-n} \int_{\mR^n} e^{ix \cdot \xi} \left[ \sum_{\abs{\alpha} \leq m} a_{\alpha}(x) \xi^{\alpha} \right] \hat{u}(\xi) \,d\xi \notag \\
 &= (2\pi)^{-n} \int_{\mR^n} e^{ix \cdot \xi} a(x,\xi) \hat{u}(\xi) \,d\xi \label{aux_differential_operator_formula}
\end{align}
where 
\begin{equation} \label{a_differential_symbol}
a(x,\xi) := \sum_{\abs{\alpha} \leq m} a_{\alpha}(x) \xi^{\alpha}
\end{equation}
is the (full) \emph{symbol} of $A = a(x,D)$.

Now, we could try to obtain a solution to $a(x,D) u = f$ in $\mR^n$ by dividing by the symbol $a(x,\xi)$ as in \eqref{u_ad_solution_formula}:
\[
u(x) = (2\pi)^{-n} \int_{\mR^n} e^{ix \cdot \xi} \frac{1}{a(x,\xi)} \hat{f}(\xi) \,d\xi.
\]
Again, this is only formal since the division by $a(x,\xi)$ needs to be justified. However, this can be done in a certain sense if $A$ is \emph{elliptic}:

\begin{definition}
The \emph{principal symbol} (i.e.\ the part containing the highest order derivatives) of the differential operator $A = a(x,D)$ is 
\[
\sigma_{\mathrm{pr}}(A) := \sum_{\abs{\alpha} = m} a_{\alpha}(x) \xi^{\alpha}.
\]
We say that $A$ is \emph{elliptic} if its principal symbol is nonvanishing for $\xi \neq 0$.
\end{definition}

A basic result of microlocal analysis states that the function 
\[
u_1(x) := (2\pi)^{-n} \int_{\mR^n} e^{ix \cdot \xi} b(x,\xi) \hat{f}(\xi) \,d\xi
\]
with 
\begin{equation} \label{b_division_approximate}
b(x,\xi) := \frac{1-\psi(\xi)}{a(x,\xi)},
\end{equation}
where $\psi \in C^{\infty}_c(\mR^n)$ is a cutoff with $\psi(\xi) = 1$ in a sufficiently large neighborhood of $\xi=0$ (so that $a(x,\xi)$ does not vanish outside this neighborhood), is an \emph{approximate solution} of $Au = f$ in the sense that 
\[
Au_1 = f + f_1
\]
where $f_1$ is one derivative smoother than $f$. Moreover, it is possible to construct an approximate solution $u_{\mathrm{app}}$ so that 
\[
Au_{\mathrm{app}} = f + r, \qquad r \in C^{\infty}(\mR^n).
\]

\subsection{Pseudodifferential operators} \label{subseq_pseudodifferential}

In analogy with the formula \eqref{aux_differential_operator_formula}, a \emph{pseudodifferential operator} ($\Psi$DO) is an operator $A$ of the form 
\begin{equation} \label{a_pseudodifferential_definition_one}
Au(x) = (2\pi)^{-n} \int_{\mR^n} e^{ix \cdot \xi} a(x,\xi) \hat{u}(\xi) \,d\xi
\end{equation}
where $a(x,\xi)$ is a \emph{symbol} with certain properties. The most standard symbol class $S^m = S^m_{1,0}(\mR^n)$ is defined as follows:

\begin{definition}
The symbol class $S^m$ consists of functions $a \in C^{\infty}(\mR^n \times \mR^n)$ such that for any $\alpha, \beta \in \mN_0^n$ there is $C_{\alpha,\beta} > 0$ with 
\[
\abs{\p_x^{\alpha} \p_{\xi}^{\beta} a(x,\xi)} \leq C_{\alpha,\beta} (1+\abs{\xi})^{m-\abs{\beta}}, \qquad \xi \in \mR^n.
\]
If $a \in S^m$, the corresponding $\Psi$DO $A = \mathrm{Op}(a)$ is defined by \eqref{a_pseudodifferential_definition_one}. We denote by $\Psi^m$ the set of $\Psi$DOs corresponding to $S^m$.
\end{definition}

Note that symbols in $S^m$ behave roughly like polynomials of order $m$ in the $\xi$-variable. In particular, the symbols $a(x,\xi)$ in \eqref{a_differential_symbol} belong to $S^m$ and the corresponding differential operators $a(x,D)$ belong to $\Psi^m$. Moreover, if $a(x,D)$ is elliptic, then the symbol $b(x,\xi) = \frac{1-\psi(\xi)}{a(x,\xi)}$ as in \eqref{b_division_approximate} belongs to $S^{-m}$.  Thus the class of $\Psi$DOs is large enough to include  differential operators as well as approximate inverses of elliptic operators. Also normal operators of the X-ray transform or Radon transform in $\mR^n$ are $\Psi$DOs.

\begin{remark}[Homogeneous symbols]
We saw in Section \ref{subsec_constant_coefficient_pdes} that the elliptic operator $-\Delta$ has the inverse 
\[
G: f \mapsto \mF^{-1} \left\{ \frac{1}{\abs{\xi}^2} \hat{f}(\xi) \right\}.
\]
The symbol $\frac{1}{\abs{\xi}^2}$ is not in $S^{-2}$, since it is not smooth near $0$. However, one often thinks of $G$ as a $\Psi$DO by writing 
\[
G = G_1 + G_2, \quad G_1 := \mF^{-1} \left\{ \frac{1-\psi(\xi)}{\abs{\xi}^2} \hat{f}(\xi) \right\}, \quad G_2 := \mF^{-1} \left\{ \frac{\psi(\xi)}{\abs{\xi}^2} \hat{f}(\xi) \right\},
\]
where $\psi \in C^{\infty}_c(\mR^n)$ satisfies $\psi = 1$ near $0$. Now $G_1$ is a $\Psi$DO in  $\Psi^{-2}$, since $\frac{1-\psi(\xi)}{\abs{\xi}^2} \in S^{-2}$, and $G_2$ is smoothing in the sense that it maps any $L^1$ function into a $C^{\infty}$ function (at least if $n \geq 3$).

In general, in $\Psi$DO theory smoothing operators are considered to be negligible (since at least they do not introduce new singularities), and many computations in $\Psi$DO calculus are made only modulo smoothing error terms. In this sense one often views $G$ as a $\Psi$DO by identifying it with $G_1$. The same kind of identification is done for operators whose symbol $a(x,\xi)$ is homogenous of some order $m$ in $\xi$. More generally one can consider \emph{polyhomogeneous} symbols $b \in S^m$ having the form 
\[
b(x,\xi) \sim \sum_{j=0}^{\infty} b_{m-j}(x,\xi)
\]
where each $b_{m-j}$ is homogeneous of order $m-j$ in $\xi$, and $\sim$ is a certain asympotic summation. Corresponding $\Psi$DOs are called \emph{classical $\Psi$DOs}.
\end{remark}

It is very important that one can compute with $\Psi$DOs in much the same way as with differential operators. One often says that $\Psi$DOs have a \emph{calculus}. The following theorem lists typical rules of computation (it is instructive to think first why such rules are valid for differential operators):

\begin{theorem}[$\Psi$DO calculus] \label{thm_psdo_calculus_basic} $\phantom{a}$
\begin{enumerate}
\item[(a)] 
(Principal symbol) There is a one-to-one correspondence between operators in $\Psi^m$ and (full) symbols in $S^m$, and each operator $A \in \Psi^m$ has a well defined \emph{principal symbol} $\sigma_{\mathrm{pr}}(A)$. The principal symbol may be computed by testing $A$ against highly oscillatory functions\footnote{This is valid if $A$ is a classical $\Psi$DO.}:
\begin{equation} \label{principal_symbol_formula}
\sigma_{\mathrm{pr}}(A)(x,\xi) = \lim_{\lambda \to \infty} \lambda^{-m} e^{-i\lambda x \cdot \xi} A(e^{i\lambda x \cdot \xi});
\end{equation}
\item[(b)] 
(Composition) If $A \in \Psi^m$ and $B \in \Psi^{m'}$, then $AB \in \Psi^{m+m'}$ and $\sigma_{\mathrm{pr}}(AB) = \sigma_{\mathrm{pr}}(A) \sigma_{\mathrm{pr}}(B)$;
\item[(c)] 
(Sobolev mapping properties) Each $A \in \Psi^m$ is a bounded operator $H^s(\mR^n) \to H^{s-m}(\mR^n)$ for any $s \in \mR$;
\item[(d)] 
(Elliptic operators have approximate inverses) If $A \in \Psi^m$ is elliptic, there is $B \in \Psi^{-m}$ so that $AB = \mathrm{Id} + K$ and $BA = \mathrm{Id} + L$ where $K, L \in \Psi^{-\infty}$, i.e.\ $K, L$ are \emph{smoothing} (they map any $H^{-s}$ function to $H^t$ for any $t$, hence also to $C^{\infty}$ by Sobolev embedding).
\end{enumerate}
\end{theorem}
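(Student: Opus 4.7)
The plan is to prove the four parts in sequence, with (b)--(d) all resting on the symbol-to-operator correspondence of (a). For part (a), the key computation is that applying $A = \mathrm{Op}(a)$ to a plane wave recovers the symbol: since formally $\widehat{e^{i(\cdot)\cdot\eta}}(\xi) = (2\pi)^n \delta(\xi-\eta)$, one gets $A(e^{i x \cdot \eta})(x) = e^{ix\cdot\eta} a(x,\eta)$, which shows $\mathrm{Op}$ is injective on symbols. Replacing $\eta$ by $\lambda \eta$ gives $\lambda^{-m} e^{-i\lambda x\cdot\eta} A(e^{i\lambda x\cdot\eta})(x) = \lambda^{-m} a(x,\lambda\eta)$; when $a \sim \sum_j a_{m-j}$ is classical, the leading homogeneous part $a_m$ survives in the limit $\lambda \to \infty$, recovering $\sigma_{\mathrm{pr}}(A) = a_m$.

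For part (b), I write the composition as an iterated oscillatory integral: inserting the definition of $B$, Fourier transforming, and exchanging the order of integration produces a double oscillatory integral with amplitude $a(x,\xi) b(y,\xi)$. Reducing this to a standard $\Psi$DO form, by expanding $b(y,\xi)$ in a Taylor series around $y=x$ and integrating by parts (i.e.\ stationary phase at the diagonal $y=x$), yields the composition formula
\[
\sigma(AB)(x,\xi) \sim \sum_{\alpha \in \mN_0^n} \frac{1}{\alpha!} \p_\xi^\alpha a(x,\xi) \, D_x^\alpha b(x,\xi).
\]
Each successive term drops one order in $\xi$, so the sum lies in $S^{m+m'}$, giving $AB \in \Psi^{m+m'}$ with principal part $\sigma_{\mathrm{pr}}(A) \sigma_{\mathrm{pr}}(B)$. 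Justifying these oscillatory integral manipulations and the remainder estimates is the main technical hurdle in the theorem.

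For part (c), I reduce to the case $A \in \Psi^0$ acting on $L^2$. Using (b), the operator $\Lambda^s := \mathrm{Op}((1+\abs{\xi}^2)^{s/2}) \in \Psi^s$ is an isomorphism $H^s(\mR^n) \to L^2(\mR^n)$, so the claim reduces to showing that $\Lambda^{s-m} A \Lambda^{-s} \in \Psi^0$ is bounded on $L^2$. This is the Calder\'on--Vaillancourt theorem; one clean proof applies Schur's lemma to the Schwartz kernel $K(x,y) = (2\pi)^{-n} \int_{\mR^n} e^{i(x-y)\cdot\xi} a(x,\xi) \,d\xi$, using repeated integration by parts in $\xi$ (with the operator $(1-\Delta_\xi)/(1+\abs{x-y}^2)$) to produce the decay needed to bound $\sup_x \int \abs{K(x,y)}\,dy$ and its transpose.

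Finally, for part (d), I construct the parametrix by Neumann iteration. Ellipticity gives $\sigma_{\mathrm{pr}}(A)(x,\xi) \neq 0$ for $\xi \neq 0$, so $b_0(x,\xi) := (1-\psi(\xi))/a(x,\xi)$ lies in $S^{-m}$ for a suitable cutoff $\psi$, and $B_0 := \mathrm{Op}(b_0)$ satisfies $A B_0 = \mathrm{Id} + R_1$ with $R_1 \in \Psi^{-1}$ by (b). Setting $B \sim B_0 \sum_{j \geq 0} (-R_1)^j$ formally gives $AB = \mathrm{Id}$; to make this precise I invoke an asymptotic (Borel-type) summation lemma to produce a genuine symbol $b \in S^{-m}$ with $b \sim \sum_j b_{-m-j}$ realizing the formal series, so that $AB - \mathrm{Id} \in \Psi^{-\infty}$. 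The same argument on the left yields a left parametrix $L$, and a standard three-term argument (expanding $LAB$ two ways) shows $L = B$ modulo $\Psi^{-\infty}$, so $B$ is a two-sided parametrix.
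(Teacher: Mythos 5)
The paper itself offers no proof of this theorem: Section~2 states explicitly that the treatment is formal, that ``we will give no proofs whatsoever,'' and refers to H\"ormander's treatise for the details. So there is no argument in the paper to compare yours against; what matters is whether your outline is itself sound. Parts (a), (b) and (d) follow the standard route correctly: the plane-wave computation $A(e^{ix\cdot\eta}) = e^{ix\cdot\eta}a(x,\eta)$ gives both the injectivity of $\mathrm{Op}$ and, after rescaling $\eta \mapsto \lambda\eta$, the principal symbol formula for classical symbols; the stationary-phase expansion at the diagonal gives the composition asymptotics $\sum_\alpha \frac{1}{\alpha!}\p_\xi^\alpha a\, D_x^\alpha b$; and the Neumann-series parametrix with asymptotic (Borel-type) summation, plus the three-term argument identifying left and right parametrices, is exactly how (d) is done. (In (b) the amplitude before reduction is really $a(x,\xi)b(y,\eta)$ with phase $(x-y)\cdot\xi + y\cdot\eta$, stationary at $y=x$, $\eta=\xi$; your shorthand $a(x,\xi)b(y,\xi)$ is what survives after one integration, a harmless imprecision in a sketch.)

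There is, however, a genuine gap in part (c). After conjugating by $\Lambda^{s}$ you must prove $L^2$ boundedness for $A \in \Psi^0$, and Schur's test applied to the Schwartz kernel does not do this. Integration by parts in $\xi$ gives $\abs{K(x,y)} \leq C_N \abs{x-y}^{-2N}$ only for $2N > n + m$; at $m = 0$ the best diagonal bound you can extract this way is of order $\abs{x-y}^{-(n+1)}$, and indeed the kernel of a generic order-zero $\Psi$DO has a non-integrable $\abs{x-y}^{-n}$-type singularity on the diagonal (the Riesz transforms are the model case), so $\sup_x \int \abs{K(x,y)}\,dy = \infty$ and Schur's lemma is simply inapplicable. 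Your argument works for $A \in \Psi^{m}$ with $m < 0$ but fails precisely at the order-zero endpoint you reduced to. The standard repairs are H\"ormander's square-root trick --- write $M - \abs{a}^2 = \abs{c}^2$ with $c \in S^0$, use the composition calculus to get $A^*A = M - C^*C + R$ with $R \in \Psi^{-1}$, and induct downward on the order until Schur does apply --- or the Cotlar--Stein almost-orthogonality lemma applied to a dyadic decomposition of the symbol. (Also, ``Calder\'on--Vaillancourt'' properly refers to the harder $S^0_{\rho,\delta}$ results; for $S^0_{1,0}$ the statement is older and the square-root argument is the textbook proof.) With part (c) patched in this way, the rest of your outline stands.
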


The above properties are valid in the standard $\Psi$DO calculus in $\mR^n$. However, motivated by different applications, $\Psi$DOs have been considered in various other settings. Each of these settings comes with an associated calculus whose rules of computation are similar but adapted to the situation at hand. Examples of different settings for $\Psi$DO calculus include 
\begin{enumerate}
\item 
open sets in $\mR^n$ (local setting);
\item 
compact manifolds without boundary, possibly acting on sections of vector bundles; 
\item 
compact manifolds with boundary (transmission condition / Boutet de Monvel calculus);
\item 
non-compact manifolds (e.g.\ Melrose scattering calculus); and 
\item 
operators with a small or large parameter (semiclassical calculus).
\end{enumerate}

\section{Wave front sets and Fourier integral operators} \label{sec_fourier_integral_operators}

For a reference to wave front sets, see \cite[Chapter 8]{Hormander}. Sobolev wave front sets are considered in \cite[Section 18.1]{Hormander}. FIOs are discussed in \cite[Chapter 25]{Hormander}.

\subsection{The role of singularities}

We first discuss the singular support of $u$, which consists of those points $x_0$ such that $u$ is not a smooth function in any neighborhood of $x_0$. We also consider the Sobolev singular support, which also measures the "strength" of the singularity (in the $L^2$ Sobolev scale).

\begin{definition}[Singular support]
We say that a function or distribution $u$ is \emph{$C^{\infty}$ (resp.\ $H^{\alpha}$) near $x_0$} if there is $\varphi \in C^{\infty}_c(\mR^n)$ with $\varphi = 1$ near $x_0$ such that $\varphi u$ is in $C^{\infty}(\mR^n)$ (resp.\ in $H^{\alpha}(\mR^n)$). We define 
\begin{align*}
\mathrm{sing\,supp}(u) &= \mR^n \setminus \{ x_0 \in \mR^n \,;\, \text{$u$ is $C^{\infty}$ near $x_0$} \}, \\
\mathrm{sing\,supp}^{\alpha}(u) &= \mR^n \setminus \{ x_0 \in \mR^n \,;\, \text{$u$ is $H^{\alpha}$ near $x_0$} \}.
\end{align*}
\end{definition}

\begin{example} \label{example_piecewise_constant}
Let $D_1, \ldots, D_N$ be bounded domains with $C^{\infty}$ boundary in $\mR^n$ so that $\ol{D}_j \cap \ol{D}_k = \emptyset$ for $j \neq k$, and define 
\[
u = \sum_{j=1}^N c_j \chi_{D_j}
\]
where $c_j \neq 0$ are constants, and $\chi_{D_j}$ is the characteristic function of $D_j$. Then 
\[
\mathrm{sing\,supp}^{\alpha}(u) = \emptyset \text{ for $\alpha < 1/2$}
\]
since $u \in H^{\alpha}$ for $\alpha < 1/2$, but 
\[
\mathrm{sing\,supp}^{\alpha}(u) = \bigcup_{j=1}^N \p D_j \text{ for $\alpha \geq 1/2$}
\]
since $u$ is not $H^{1/2}$ near any boundary point. Thus in this case the singularities of $u$ are exactly at the points where $u$ has a jump discontinuity, and their strength is precisely $H^{1/2}$. Knowing the singularities of $u$ can already be useful in applications. For instance, if $u$ represents some internal medium properties in medical imaging, the singularities of $u$ could determine the location of interfaces between different tissues. On the other hand, if $u$ represents an image, then the singularities in some sense determine the "sharp features" of the image.
\end{example}

Next we discuss the \emph{wave front set} which is a more refined notion of a singularity. For example, if $f = \chi_D$ is the characteristic function of a bounded strictly convex $C^{\infty}$ domain $D$ and if $x_0 \in \p D$, one could think that $f$ is in some sense smooth in tangential directions at $x_0$ (since $f$ restricted to a tangent hyperplane is identically zero, except possibly at $x_0$), but that $f$ is not smooth in normal directions at $x_0$ since in these directions there is a jump. The wave front set is a subset of $T^* \mR^n \setminus 0$, the cotangent space with the zero section removed:
\[
T^* \mR^n \setminus 0 := \{ (x,\xi) \,;\, x, \xi \in \mR^n, \xi \neq 0 \}.
\]

\begin{definition}[Wave front set]
Let $u$ be a distribution in $\mR^n$. We say that $u$ is (microlocally) \emph{$C^{\infty}$ (resp.\ $H^{\alpha}$) near $(x_0,\xi_0)$} if there exist $\varphi \in C^{\infty}_c(\mR^n)$ with $\varphi = 1$ near $x_0$ and $\psi \in C^{\infty}(\mR^n \setminus \{0\})$ so that $\psi = 1$ near $\xi_0$ and $\psi$ is homogeneous of degree $0$, such that 
\[
\text{for any $N$ there is $C_N > 0$ so that $\psi(\xi) ( \varphi u ) \ehat(\xi) \leq C_N(1+\abs{\xi})^{-N}$}
\]
(resp.\ $\mF^{-1} \{ \psi(\xi) ( \varphi u ) \ehat(\xi) \} \in H^{\alpha}(\mR^n)$). The \emph{wave front set} $WF(u)$ (resp.\ \emph{$H^{\alpha}$ wave front set} $WF^{\alpha}(u)$) consists of those points $(x_0,\xi_0)$ where $u$ is not microlocally $C^{\infty}$ (resp.\ $H^{\alpha}$).
\end{definition}

\begin{example}
The wave front set of the function $u$ in Example \ref{example_piecewise_constant} is 
\[
WF(u) = \bigcup_{j=1}^N N^*(D_j)
\]
where $N^*(D_j)$ is the conormal bundle of $D_j$,
\[
N^*(D_j) := \{ (x,\xi) \,;\, \text{$x \in \p D_j$ and $\xi$ is normal to $\p D_j$ at $x$} \}.
\]
\end{example}

The wave front set describes singularities more precisely than the singular support, since one always has 
\begin{equation} \label{wave_front_set_projection}
\pi(WF(u)) = \mathrm{sing\,supp}(u)
\end{equation}
where $\pi: (x,\xi) \mapsto x$ is the projection to $x$-space.

It is an important fact that applying a $\Psi$DO to a function or distribution never creates new singularities:

\begin{theorem}[Pseudolocal/microlocal property of $\Psi$DOs] \label{thm_psdo_microlocal_property}
Any $A \in \Psi^m$ has the pseudolocal property 
\begin{align*}
\mathrm{sing\,supp}(Au) &\subset \mathrm{sing\,supp}(u), \\
\mathrm{sing\,supp}^{\alpha-m}(Au) &\subset \mathrm{sing\,supp}^{\alpha}(u)
\end{align*}
and the microlocal property 
\begin{align*}
WF(Au) &\subset WF(u), \\
WF^{\alpha-m}(Au) &\subset WF^{\alpha}(u).
\end{align*}
\end{theorem}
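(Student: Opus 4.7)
The singular support inclusions follow from the wave front set inclusions via the projection identity \eqref{wave_front_set_projection}, so the plan is to focus on $WF(Au) \subset WF(u)$ and its Sobolev refinement, and to argue by contrapositive. Fix $(x_0,\xi_0) \notin WF(u)$; the goal is to produce $D \in \Psi^0$ elliptic at $(x_0,\xi_0)$ with $DAu \in C^{\infty}$ (resp.\ $H^{\alpha-m}$), which by the characterization of the wave front set in terms of elliptic cutoffs forces $(x_0,\xi_0) \notin WF(Au)$ (resp.\ $\notin WF^{\alpha-m}(Au)$).

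\textbf{Step 1 --- Repackaging the hypothesis as an elliptic $\Psi$DO.} From the definition of $WF$, there exist $\varphi \in C^{\infty}_c(\mR^n)$ with $\varphi = 1$ near $x_0$ and $\psi \in C^{\infty}(\mR^n \setminus \{0\})$ homogeneous of degree $0$ with $\psi = 1$ near $\xi_0$ such that $\psi(\xi)\widehat{\varphi u}(\xi)$ is rapidly decaying (resp.\ $\mF^{-1}\{\psi \widehat{\varphi u}\} \in H^{\alpha}$). Cutting $\psi$ off near the origin produces a symbol $\tilde{\psi} \in S^0$, and the operator $P := \tilde{\psi}(D)\varphi(x)$ lies in $\Psi^0$ by composition, has principal symbol $\tilde{\psi}(\xi)\varphi(x)$ (so is elliptic at $(x_0,\xi_0)$), and satisfies $Pu \in C^{\infty}$ (resp.\ $Pu \in H^{\alpha}$).

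\textbf{Step 2 --- Microlocal factorization and conclusion.} Choose $D \in \Psi^0$ with $\sigma_{\mathrm{pr}}(D)(x_0,\xi_0) = 1$ whose principal symbol is supported in a closed conic neighborhood $V$ of $(x_0,\xi_0)$ on which $\sigma_{\mathrm{pr}}(P)$ is bounded below. I will construct $E \in \Psi^m$ so that
\begin{equation*}
DA = EP + R, \qquad R \in \Psi^{-\infty},
\end{equation*}
by setting $\sigma_{\mathrm{pr}}(E) = \sigma_{\mathrm{pr}}(DA)/\sigma_{\mathrm{pr}}(P)$ on $V$ and $0$ outside, then iteratively cancelling lower-order errors via the composition formula of Theorem \ref{thm_psdo_calculus_basic}(b), and asymptotically summing the resulting formal series into a genuine symbol in $S^m$. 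Applied to $u$ this gives $DAu = E(Pu) + Ru$. In the smooth case, $Pu \in C^{\infty}$ and Theorem \ref{thm_psdo_calculus_basic}(c) (plus Sobolev embedding) give $E(Pu) \in C^{\infty}$, while $Ru \in C^{\infty}$ by smoothingness of $R$; hence $DAu \in C^{\infty}$. In the Sobolev case, $E \in \Psi^m$ sends $H^{\alpha}$ to $H^{\alpha-m}$, yielding $DAu \in H^{\alpha-m}$. Either way, ellipticity of $D$ at $(x_0,\xi_0)$ delivers the conclusion.

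\textbf{Main obstacle.} The heart of the argument is the microlocal factorization $DA = EP + R$ modulo $\Psi^{-\infty}$: this refines the global elliptic parametrix of Theorem \ref{thm_psdo_calculus_basic}(d) in that one only has ellipticity of $P$ on the conic support of $\sigma_{\mathrm{pr}}(D)$, rather than globally. The algorithm (divide by $\sigma_{\mathrm{pr}}(P)$ on $V$, iterate through lower-order corrections via the composition formula, then asymptotically sum) is standard in $\Psi$DO calculus, but verifying that supports remain inside $V$ at every iteration and that the cumulative remainder really lies in $\bigcap_N S^{-N}$ is the most intricate bookkeeping. A closely related point, needed to pass from "$DAu$ regular and $D$ elliptic at $(x_0,\xi_0)$" back to "$Au$ microlocally regular at $(x_0,\xi_0)$", is the same factorization trick applied in reverse to $D$ itself.
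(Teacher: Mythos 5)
The paper itself gives no proof of this theorem: Section \ref{sec_fourier_integral_operators} is expository and defers to H\"ormander, so there is nothing internal to compare against. Your proposal is the standard microlocal-parametrix argument (characterize $WF$ by noncharacteristic operators in $\Psi^0$, then factor $DA = EP + R$ modulo smoothing on a cone where $P$ is elliptic), and the overall strategy is correct. Two points, however, need repair before the argument closes.

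First, in Step 2 it is not enough to ask that the \emph{principal} symbol of $D$ be supported in the cone $V$. The iterative cancellation of lower-order terms repeatedly divides quantities built from the \emph{full} asymptotic expansion of $\sigma(DA)$ by $\sigma_{\mathrm{pr}}(P)$, and these divisions are only legitimate where the full symbol of $DA$ is supported, i.e.\ where the essential support of $D$ lies. As stated, the lower-order corrections need not be supported in $V$ and the induction does not close. The fix is cheap: take $D = \mathrm{Op}(d)$ for a single cutoff $d \in S^0$ supported in $V$ and equal to $1$ near the ray through $(x_0,\xi_0)$; then every term $\partial_\xi^\alpha d \, D_x^\alpha a$ in the expansion of $\sigma(DA)$ is supported in $V$, and the construction of $E$ proceeds. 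Second, your closing step --- from ``$DAu \in C^\infty$ and $D$ noncharacteristic at $(x_0,\xi_0)$'' to ``$(x_0,\xi_0) \notin WF(Au)$'' --- is not a black box: it is precisely the same factorization applied with the roles of $D$ and the defining cutoff $\tilde\psi(D)\varphi$ reversed, so it must be proved first (as a standalone characterization of $WF$ and $WF^{\alpha}$ via $\Psi^0$ operators) to avoid circularity; you name this in your ``main obstacle'' but the logical order should be explicit. Two smaller remarks: to apply Theorem \ref{thm_psdo_calculus_basic}(c) to $E(Pu)$ you need $Pu$ in global Sobolev spaces, not merely $Pu \in C^{\infty}$ --- this does follow since $(1+\abs{\xi}^2)^{t/2}\tilde\psi\,\widehat{\varphi u} \in L^2$ for every $t$ by the rapid decay plus the compact support of $(\tilde\psi - \psi)\widehat{\varphi u}$, but say so; and the Sobolev singular-support inclusion requires the $H^{\alpha}$ analogue of \eqref{wave_front_set_projection}, which needs a separate (compactness-over-directions) argument not contained in the stated identity.
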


Elliptic operators are those that completely preserve singularities:

\begin{theorem} \label{thm_elliptic_regularity_singsupp}
(Elliptic regularity) Let $A \in \Psi^m$ be elliptic. Then, for any $u$, 
\begin{align*}
\mathrm{sing\,supp}(Au) &= \mathrm{sing\,supp}(u), \\
WF(Au) &= WF(u).
\end{align*}
Thus any solution $u$ of $Au = f$ is singular precisely at those points where $f$ is singular. There are corresponding statements for Sobolev singularities.
\end{theorem}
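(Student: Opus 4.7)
The plan is to establish the two inclusions separately; the inclusion $\mathrm{sing\,supp}(Au) \subset \mathrm{sing\,supp}(u)$ and $WF(Au) \subset WF(u)$ is already given by the pseudolocal/microlocal property of $\Psi$DOs in Theorem \ref{thm_psdo_microlocal_property}, so the work lies in the reverse inclusions, which are exactly where ellipticity enters.

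For the reverse inclusions, I would invoke Theorem \ref{thm_psdo_calculus_basic}(d) to produce a parametrix: since $A \in \Psi^m$ is elliptic, there exists $B \in \Psi^{-m}$ with $BA = \mathrm{Id} + L$ and $L \in \Psi^{-\infty}$. Applying this to $u$ yields the identity
\[
u = B(Au) - Lu.
\]
Because $L$ is smoothing, $Lu \in C^{\infty}(\mR^n)$, hence $\mathrm{sing\,supp}(Lu) = \emptyset$ and $WF(Lu) = \emptyset$. The pseudolocal/microlocal property applied to $B \in \Psi^{-m}$ gives $\mathrm{sing\,supp}(B(Au)) \subset \mathrm{sing\,supp}(Au)$ and $WF(B(Au)) \subset WF(Au)$. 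Using that singular support and wave front set are subadditive under addition, the decomposition above yields $\mathrm{sing\,supp}(u) \subset \mathrm{sing\,supp}(Au)$ and $WF(u) \subset WF(Au)$, completing the two equalities.

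For the Sobolev analogues, I would run exactly the same argument with care about the Sobolev index. The forward inclusions $\mathrm{sing\,supp}^{\alpha-m}(Au) \subset \mathrm{sing\,supp}^{\alpha}(u)$ and $WF^{\alpha-m}(Au) \subset WF^{\alpha}(u)$ are Theorem \ref{thm_psdo_microlocal_property}. For the reverse, the same identity $u = B(Au) - Lu$ applies: $L$ being smoothing means $Lu$ is $H^t$ for every $t$, hence contributes no Sobolev singularities, while applying the Sobolev pseudolocal property to $B \in \Psi^{-m}$ bumps the index back up by $m$, giving $\mathrm{sing\,supp}^{\alpha}(u) \subset \mathrm{sing\,supp}^{(\alpha-m)-(-m)}(Au) = \mathrm{sing\,supp}^{\alpha-m}(Au)$ and similarly for $WF^{\alpha}$.

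I do not expect a genuine obstacle: the entire content of the theorem is packaged into the existence of a two-sided parametrix from Theorem \ref{thm_psdo_calculus_basic}(d) together with the microlocal property from Theorem \ref{thm_psdo_microlocal_property}. The only point deserving care is bookkeeping of the Sobolev indices in the second part, and verifying that the subadditivity $WF(u+v) \subset WF(u) \cup WF(v)$ (and its Sobolev version), which is immediate from the definition via cutoffs $\varphi, \psi$, is used at the right moment.
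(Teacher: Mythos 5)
Your proof is correct and follows essentially the same route as the paper: the parametrix identity $u = BAu - Lu$ from Theorem \ref{thm_psdo_calculus_basic}(d) combined with the microlocal property of $B$, with the Sobolev indices handled correctly. The only cosmetic difference is that the paper deduces the singular support equality from the wave front set equality via \eqref{wave_front_set_projection}, while you argue it directly; both are fine.
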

\begin{proof}
First note that by Theorem \ref{thm_psdo_microlocal_property}, 
\[
WF(Au) \subset WF(u).
\]
Conversely, since $A \in \Psi^m$ is elliptic, by Theorem \ref{thm_psdo_calculus_basic}(d) there is $B \in \Psi^{-m}$ so that 
\[
BA = \mathrm{Id} + L, \qquad L \in \Psi^{-\infty}.
\]
Thus for any $u$ one has 
\[
u + Lu = BAu.
\]
Since $L$ is smoothing, $Lu \in C^{\infty}$, which implies that $u = BAu$ modulo $C^{\infty}$. Thus it follows that 
\[
WF(u) = WF(BAu) \subset WF(Au).
\]
Thus $WF(Au) = WF(u)$. The claim for singular supports follows by \eqref{wave_front_set_projection}.
\end{proof}

\subsection{Fourier integral operators}

We have seen in Section \ref{subseq_pseudodifferential} that the class of pseudodifferential operators includes approximate inverses of elliptic operators. In order to handle approximate inverses for hyperbolic and transport equations, it is required to work with a larger class of operators.

\begin{motivation}
Consider the initial value problem for the wave equation,
\begin{gather*}
(\p_t^2 - \Delta) u(x,t) = 0 \text{ in $\mR^n \times (0,\infty)$}, \\
u(x,0) = f(x), \qquad \p_t u(x,0) = 0.
\end{gather*}
This is again a constant coefficient PDE, and we will solve this formally by taking the Fourier transform in space, 
\[
\tilde{u}(\xi,t) := \int_{\mR^n} e^{-ix \cdot \xi} u(x,t) \,dx, \qquad \xi \in \mR^n.
\]
After taking Fourier transforms in space, the above equation becomes 
\begin{gather*}
(\p_t^2 + \abs{\xi}^2) \tilde{u}(\xi,t) = 0 \text{ in $\mR^n \times (0,\infty)$}, \\
\tilde{u}(\xi,0) = \hat{f}(\xi), \qquad \p_t \tilde{u}(\xi,0) = 0.
\end{gather*}
For each fixed $\xi$ this is an ODE in $t$, and the solution is 
\[
\tilde{u}(\xi,t) = \cos(t\abs{\xi}) \hat{f}(\xi) = \frac{1}{2} (e^{i t \abs{\xi}} + e^{-i t \abs{\xi}}) \hat{f}(\xi).
\]
Taking inverse Fourier transforms in space, we obtain 
\begin{equation} \label{uxt_classical_representation}
u(x,t) = \frac{1}{2} \sum_{\pm} (2\pi)^{-n} \int_{\mR^n} e^{i(x \cdot \xi \pm t\abs{\xi})} \hat{f}(\xi) \,d\xi.
\end{equation}
\end{motivation}

Generalizing \eqref{uxt_classical_representation}, we can consider operators of the form 
\begin{equation} \label{a_fio_definition}
Au(x) = (2\pi)^{-n} \int_{\mR^n} e^{i \varphi(x,\xi)} a(x,\xi) \hat{u}(\xi) \,d\xi
\end{equation}
where $a(x,\xi)$ is a symbol (for instance in $S^m$), and $\varphi(x,\xi)$ is a real valued phase function. Such operators are examples of \emph{Fourier integral operators} (more precisely, FIOs whose canonical relation is locally the graph of a canonical transformation, see \cite[Section 25.3]{Hormander}). For $\Psi$DOs the phase function is always $\varphi(x,\xi) = x \cdot \xi$, but for FIOs the phase function can be quite general, though it is usually required to be homogeneous of degree $1$ in $\xi$, and to satisfy the non-degeneracy condition $\det(\p_{x_j \xi_k} \varphi) \neq 0$.

We will not go into precise definitions, but only remark that the class of FIOs includes pseudodifferential operators as well as approximate inverses of hyperbolic and transport operators (or more generally real principal type operators). There is a calculus for FIOs, analogous to the pseudodifferential calculus, under certain conditions in various settings. An important property of FIOs is that they, unlike pseudodifferential operators, can move singularities. This aspect will be discussed next.

\subsection{Propagation of singularities}

\begin{example} \label{example_wave_equation_propagation}
Let $t > 0$ be fixed, and consider the operators from \eqref{uxt_classical_representation}, 
\[
A_{\pm t} f(x) = (2\pi)^{-n} \int_{\mR^n} e^{i(x \cdot \xi \mp t\abs{\xi})} \hat{f}(\xi) \,d\xi.
\]
Then 
\[
u(x,t) = \frac{1}{2}(A_{+t}f(x) + A_{-t} f(x)).
\]
Using FIO theory, since the phase functions are $\varphi(x,\xi) = x \cdot \xi \mp t\abs{\xi}$, it follows that 
\[
WF(A_{\pm t} f) \subset \chi_{\pm t}(WF(f))
\]
where $\chi_{\pm t}$ is the \emph{canonical transformation} (i.e.\ diffeomorphism of $T^* \mR^n \setminus 0$ that preserves the symplectic structure) given by 
\[
\chi_{\pm t}(x,\xi) = (x \pm t\xi/\abs{\xi}, \xi).
\]
This means that the FIO $A_{\pm}$ takes a singularity $(x,\xi)$ of the initial data $f$ and moves it along the line through $x$ in direction $\pm \xi/\abs{\xi}$ to $(x \pm t\xi/\abs{\xi}, \xi)$. Thus singularities of solutions of the wave equation $(\p_t^2 - \Delta) u = 0$ propagate along straight lines with constant speed one.
\end{example}

\begin{remark}
In general, any FIO has an associated \emph{canonical relation} that describes what the FIO does to singularities. The canonical relation of the FIO $A$ defined in \eqref{a_fio_definition} is (see \cite[Section 25.3]{Hormander}) 
\[
C = \{ (x, \nabla_x \varphi(x,\xi), \nabla_{\xi} \varphi(x,\xi), \xi) \,;\, (x,\xi) \in T^* \mR^n \setminus 0 \},
\]
and $A$ moves singularities according to the rule 
\[
WF(Au) \subset C(WF(u))
\]
where 
\[
C(WF(u)) := \{ (x,\xi) \;;\, (x,\xi,y,\eta) \in C \text{ for some } (y,\eta) \in WF(u) \}.
\]
Using these formulas, it is easy to check that the canonical relation $C_{\pm}$ of $A_{\pm t}$ in Example \ref{example_wave_equation_propagation} is the graph of $\chi_{\pm t}$ in the sense that 
\[
C_{\pm} = \{ (\chi_{\pm t}(y,\eta), y,\eta ) \,;\, (y,\eta) \in T^* \mR^n \setminus 0 \}
\]
and one indeed has $WF(A_{\pm t} u) \subset C_{\pm}(WF(u)) = \chi_{\pm t}(WF(u))$.
\end{remark}

There is a far reaching extension of Example \ref{example_wave_equation_propagation}, which shows that the singularities of a solution of $Pu = 0$ propagate along certain curves in phase space (so called \emph{null bicharacteristic curves}) as long as $P$ has real valued principal symbol.

\begin{theorem}[Propagation of singularities] \label{thm_propagation_of_singularities}
Let $P \in \Psi^m$ have real principal symbol $p_m$ that is homogeneous of degree $m$ in $\xi$. If 
\[
Pu = f,
\]
then $WF(u) \setminus WF(f)$ is contained in the characteristic set $p_m^{-1}(0)$. Moreover, if $(x_0,\xi_0) \in WF(u) \setminus WF(f)$, then the whole null bicharacteristic curve $(x(t),\xi(t))$ through $(x_0,\xi_0)$ is in $WF(u) \setminus WF(f)$, where 
\begin{align*}
\dot{x}(t) &= \nabla_{\xi} p_m(x(t), \xi(t)), \\
\dot{\xi}(t) &= -\nabla_x p_m(x(t), \xi(t)).
\end{align*}
\end{theorem}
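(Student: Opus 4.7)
The plan is to prove the two assertions separately. The first --- that $WF(u) \setminus WF(f) \subset p_m^{-1}(0)$ --- is microlocal elliptic regularity, essentially a localized version of Theorem~\ref{thm_elliptic_regularity_singsupp}. If $(x_0,\xi_0) \notin p_m^{-1}(0)$, then by homogeneity $p_m$ does not vanish on some conic neighborhood $\Gamma$ of $(x_0,\xi_0)$. Following the parametrix construction sketched in Section~\ref{subseq_pseudodifferential}, I would take a conic cutoff $\chi \in S^0$ supported in $\Gamma$ with $\chi = 1$ near $(x_0,\xi_0)$, and build $Q \in \Psi^{-m}$ with principal symbol $\chi/p_m$ so that $QP = \mOp(\chi) + R$ with $R$ microlocally smoothing on $\Gamma$. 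Applying this to $Pu = f$ gives $\mOp(\chi)u = Qf - Ru$, which is microlocally $C^{\infty}$ at $(x_0,\xi_0)$ whenever $(x_0,\xi_0) \notin WF(f)$, so $(x_0,\xi_0) \notin WF(u)$.

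For the second, substantive statement I would use H\"ormander's positive commutator method. The convenient contrapositive is: fix a bicharacteristic arc $\gamma \colon [0,T] \to p_m^{-1}(0) \setminus 0$ with $\gamma(0) = (x_0,\xi_0)$, assume $\gamma([0,T]) \cap WF(f) = \emptyset$, and show that $\gamma(T) \notin WF^s(u)$ implies $\gamma(0) \notin WF^s(u)$; bootstrapping in $s$ upgrades this to the $C^{\infty}$ wave front set. The heart of the argument is a \emph{commutant}: a real symbol $a \in S^{s - (m-1)/2}$ essentially supported in a thin conic tube around $\gamma([0,T])$ and satisfying a Hamilton-flow escape identity
\begin{equation*}
-H_{p_m}(a^2) \,=\, b^2 - c^2 - e,
\end{equation*}
where $b \in S^s$ is elliptic at $\gamma(0)$, $c \in S^s$ is supported near $\gamma(T)$, and $e$ is of strictly lower symbol order. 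Geometrically $a$ is an escape function growing along the forward Hamilton flow; its construction uses that, after choosing a homogeneous time parameter, $H_{p_m}$ has no zeros on $p_m^{-1}(0) \setminus 0$.

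Setting $A = \mOp(a)$, $B = \mOp(b)$, $C = \mOp(c)$, one combines the algebraic identity
\begin{equation*}
2\,\mathrm{Im}\,(APu,Au) \,=\, \tfrac{1}{i}\bigl([A^*A,P]u,u\bigr) \,+\, \tfrac{1}{i}\bigl((P-P^*)A^*A\,u,\,u\bigr)
\end{equation*}
with the principal-symbol computation $\sigma_{\mathrm{pr}}([A^*A,P]) = \tfrac{1}{i}\{a^2,p_m\}$ and the fact that $P - P^* \in \Psi^{m-1}$ (because $p_m$ is real). The escape identity converts the leading symbol of the commutator term into $b^2 - c^2 - e$, and the sharp G\aa rding inequality then produces a coercive estimate of the schematic form
\begin{equation*}
\norm{Bu}_{L^2}^2 \,\lesssim\, \norm{APu}_{L^2}\norm{Au}_{L^2} + \norm{Cu}_{L^2}^2 + \norm{u}_{(s-1/2)\text{-microlocal near }\gamma}^2.
\end{equation*}
Since $f$ is microlocally smooth along $\gamma$ (so $APu$ is smooth there) and $Cu \in L^2$ by the assumption at $\gamma(T)$, the right-hand side is finite by the inductive hypothesis on the Sobolev order, which forces $Bu \in L^2$, i.e.\ $u \in H^s$ microlocally at $\gamma(0)$.

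The principal obstacle is the escape-function construction together with the regularizing approximation needed to make the argument fully rigorous: the commutant $a$ must be cut off in frequency (typically by an extra factor $\chi(\xi/\lambda)$ with $\lambda \to \infty$) so that every remainder lives in a strictly lower symbol class, and both the error $e$ and the $(P-P^*)A^*A$ contribution must be absorbed into an inductively-controlled microlocal norm of $u$ --- the latter requires using that $\frac{1}{i}(P-P^*)$ is symmetric of order $m-1$. The underlying geometric picture --- tilting a sublevel set of a symbol along the Hamilton flow --- is simple, but this book-keeping, which is what makes sharp G\aa rding legitimately applicable at each iteration, is the technical heart of the proof.
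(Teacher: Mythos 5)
The paper does not actually prove this theorem: it is quoted as background, with the proof deferred to H\"ormander \cite{Hormander}, so there is no in-text argument to compare against. Your outline is nevertheless the standard route. Both halves are correctly identified: the inclusion $WF(u)\setminus WF(f)\subset p_m^{-1}(0)$ is microlocal elliptic regularity via a microlocal parametrix (a localized version of Theorem \ref{thm_elliptic_regularity_singsupp}), and the propagation statement is H\"ormander's positive commutator argument, with an escape function supported in a thin conic tube around the bicharacteristic arc, sharp G\aa rding, and a bootstrap in the Sobolev order. The algebraic identity for $2\,\mathrm{Im}(APu,Au)$ and the principal symbol computation for $[A^*A,P]$ are correct.

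One piece of bookkeeping would fail as written. With $a\in S^{s-(m-1)/2}$ the operator $A^*A$ has order $2s-m+1$, so \emph{both} $[A^*A,P]$ and $(P-P^*)A^*A$ have order $2s$: the skew part of $P$ contributes a real symbol $2q_{m-1}a^2$ of exactly the same order as $b^2$, not lower, so it cannot be ``absorbed into an inductively-controlled microlocal norm'' at order $s-1/2$. The standard fix is to fold it into the escape identity rather than treat it as an error: take $a=e^{-\lambda\phi}a_0$ with $H_{p_m}\phi>0$ along the arc and $\lambda$ large, so that $-H_{p_m}(a^2)-2q_{m-1}a^2\geq b^2-c^2-e$, the gain coming from $2\lambda(H_{p_m}\phi)a^2$ dominating $\abs{2q_{m-1}}a^2$ on the compact (after reduction to the cosphere bundle) support of $a$. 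Two smaller points: $H_{p_m}$ need not be nonvanishing on all of $p_m^{-1}(0)\setminus 0$ --- the construction is local along a fixed arc, and where $H_{p_m}$ is radial or zero the bicharacteristic is stationary in the cosphere bundle and the conclusion is vacuous; and the bootstrap needs a base case, namely $u\in H^{s_0}$ microlocally near the arc for some $s_0$, which holds because $u$ has finite order on a neighborhood of the compact arc.
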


\begin{example} \label{example_bicharacteristic_wave}
We compute the null bicharacteristic curves for the wave operator $P = \frac{1}{2}(\Delta-\p_t^2)$. The principal symbol of $P$ is 
\[
p_2(x,t,\xi,\tau) = \frac{1}{2}(\tau^2 - \abs{\xi}^2)
\]
The characteristic set is 
\[
p_2^{-1}(0) = \{ (x,t,\xi,\tau) \,;\, \tau = \pm \abs{\xi} \}
\]
which consists of \emph{light-like} cotangent vectors on $\mR^{n+1}_{x,t}$. The equations for the null bicharacteristic curves are 
\begin{align*}
\dot{x}(s) &= -\xi(s), \\
\dot{t}(s) &= \tau(s), \\
\dot{\xi}(s) &= 0, \\
\dot{\tau}(s) &= 0.
\end{align*}
Thus, if $\abs{\xi_0} = 1$, then the null bicharacteristic curve through $(x_0,t_0,\xi_0, \pm 1)$ is 
\[
s \mapsto (x_0 - s \xi_0, t_0 \pm s , \xi_0, \pm 1)
\]
The result of Example \ref{example_wave_equation_propagation} may thus be interpreted so that singularities of solutions of the wave equation propagate along null bicharacteristic curves for the wave operator.
\end{example}

\section{The Radon transform in the plane} \label{section_ray_transform}

In this section we outline some applications of microlocal analysis to the study of the Radon transform in the plane. Similar ideas apply to X-ray and Radon transforms in higher dimensions and Riemannian manifolds as well. The microlocal approach to Radon transforms was introduced by Guillemin \cite{Guillemin}. We refer to \cite{Quinto}, \cite{KrishnanQuinto} and references therein for a more detailed treatment of the material in this section.

\subsection{Basic properties of the Radon transform}

The \emph{X-ray transform} $If$ of a function $f$ in $\mR^n$ encodes the integrals of $f$ over all straight lines, whereas the \emph{Radon transform} $Rf$ encodes the integrals of $f$ over $(n-1)$-dimensional planes. We will focus on the case $n=2$, where the two transforms coincide. There are many ways to parametrize the set of lines in $\mR^2$. We will parametrize lines by their direction vector $\omega$ and distance $s$ from the origin.

\begin{definition}
If $f \in C^{\infty}_c(\mR^2)$, the \emph{Radon transform} of $f$ is the function 
\begin{equation*}
Rf(s,\omega) := \int_{-\infty}^{\infty} f(s\omega^{\perp} + t\omega) \,dt, \quad s \in \mR, \ \omega \in S^1.
\end{equation*}
Here $\omega^{\perp}$ is the vector in $S^1$ obtained by rotating $\omega$ counterclockwise by $90^{\circ}$.
\end{definition}

There is a well-known relation between $Rf$ and the Fourier transform $\hat{f}$. We denote by $(Rf)\etilde(\,\cdot\,,\omega)$ the Fourier transform of $Rf$ with respect to $s$.

\begin{theorem} \label{thm_fourier_slice}
(Fourier slice theorem)
\begin{equation*}
(Rf)\etilde(\sigma,\omega) = \hat{f}(\sigma \omega^{\perp}).
\end{equation*}
\end{theorem}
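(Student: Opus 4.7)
The plan is to unfold the definitions and recognize the double integral as a two dimensional Fourier transform after an orthogonal change of variables. Concretely, I will write out $(Rf)\etilde(\sigma,\omega)$ as the iterated integral
\[
(Rf)\etilde(\sigma,\omega) = \int_{-\infty}^{\infty} e^{-is\sigma} \left[ \int_{-\infty}^{\infty} f(s\omega^{\perp} + t\omega) \,dt \right] ds,
\]
and then combine the two one dimensional integrals into a single integral over $\mR^2$ in the variables $(s,t)$.

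Next I would change variables $x = s\omega^{\perp} + t\omega$. Since $\{\omega^{\perp}, \omega\}$ is an orthonormal basis of $\mR^2$, the map $(s,t) \mapsto x$ is an orthogonal transformation, so its Jacobian determinant has absolute value $1$ and the Lebesgue measure is preserved: $ds\,dt = dx$. Moreover, one reads off $s = x \cdot \omega^{\perp}$ and $t = x \cdot \omega$, so the phase $e^{-is\sigma}$ becomes $e^{-ix \cdot (\sigma \omega^{\perp})}$. Substituting gives
\[
(Rf)\etilde(\sigma,\omega) = \int_{\mR^2} e^{-ix \cdot (\sigma \omega^{\perp})} f(x)\,dx,
\]
which is exactly $\hat{f}(\sigma\omega^{\perp})$ by definition of the Fourier transform on $\mR^2$.

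The only minor technical point to justify is the use of Fubini's theorem and the change of variables. Since $f \in C^{\infty}_c(\mR^2)$, the function $(s,t) \mapsto f(s\omega^{\perp}+t\omega)$ is compactly supported and smooth, so everything is absolutely integrable and no subtleties arise. I do not expect any genuine obstacle; this is essentially a one line computation once the orthonormality of $\{\omega, \omega^{\perp}\}$ is used. The result is notable less for its difficulty than for its consequences, for instance that $R$ is injective on Schwartz functions and that the normal operator $R^* R$ is a Fourier multiplier, facts that feed into the microlocal analysis of $R$ taken up in the sequel.
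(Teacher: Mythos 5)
Your proof is correct and is essentially identical to the paper's: both unfold the definitions into a double integral and use the orthonormal change of variables $y = s\omega^{\perp} + t\omega$ (with $s = y\cdot\omega^{\perp}$) to recognize the two-dimensional Fourier transform at $\sigma\omega^{\perp}$. Your added remarks on Fubini and the unit Jacobian are fine but routine given $f \in C^{\infty}_c(\mR^2)$.
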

\begin{proof}
Parametrizing $\mR^2$ by $y = s\omega^{\perp} + t\omega$, we have 
\begin{align*}
(Rf)\etilde(\sigma,\omega) &= \int_{-\infty}^{\infty} e^{-i\sigma s} \left[ \int_{-\infty}^{\infty} f(s\omega^{\perp} + t\omega) \,dt \right] \,ds = \int_{\mR^2} e^{-i\sigma y \cdot \omega^{\perp}} f(y) \,dy \\
 &= \hat{f}(\sigma \omega^{\perp}). \qedhere
\end{align*}
\end{proof}

This result gives the first proof of injectivity of the Radon transform:

\begin{corollary} \label{corollary_radon_injectivity}
If $f \in C^{\infty}_c(\mR^2)$ is such that $Rf \equiv 0$, then $f \equiv 0$.
\end{corollary}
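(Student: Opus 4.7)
The plan is to deduce injectivity directly from the Fourier slice theorem (Theorem \ref{thm_fourier_slice}) combined with the injectivity of the Fourier transform. Since $f \in C^\infty_c(\mR^2)$, its Fourier transform $\hat f$ is a Schwartz function (in fact, an entire function by Paley--Wiener), and in particular $f$ is uniquely determined by $\hat f$ via the Fourier inversion formula.

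First I would observe that if $Rf \equiv 0$ as a function on $\mR \times S^1$, then its Fourier transform in the first variable $(Rf)\etilde(\sigma,\omega)$ vanishes identically for all $\sigma \in \mR$ and $\omega \in S^1$. Applying Theorem \ref{thm_fourier_slice} then gives $\hat f(\sigma \omega^\perp) = 0$ for every $\sigma \in \mR$ and every $\omega \in S^1$.

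Next I would note that the map $(\sigma, \omega) \mapsto \sigma \omega^\perp$ from $\mR \times S^1$ to $\mR^2$ is surjective: any $\xi \in \mR^2 \setminus \{0\}$ can be written as $\sigma \omega^\perp$ by choosing $\omega^\perp = \xi/\abs{\xi}$ and $\sigma = \abs{\xi}$ (with $\omega$ the $90^\circ$ clockwise rotation of $\xi/\abs{\xi}$), and the origin corresponds to $\sigma = 0$. Hence $\hat f \equiv 0$ on all of $\mR^2$, and Fourier inversion yields $f \equiv 0$.

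There is no real obstacle here; the content is entirely packaged in Theorem \ref{thm_fourier_slice}. The only minor point to be careful about is that $Rf \in C^\infty_c(\mR \times S^1)$ (since $f$ has compact support) so that taking the Fourier transform in $s$ is unambiguous, and that $\hat f$ is continuous (in fact smooth) so that the vanishing on the set $\{\sigma \omega^\perp\}$ really implies vanishing everywhere. Both of these are automatic from $f \in C^\infty_c(\mR^2)$.
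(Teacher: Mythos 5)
Your proof is correct and follows exactly the same route as the paper: the Fourier slice theorem gives $\hat f(\sigma\omega^\perp)=0$ for all $(\sigma,\omega)$, hence $\hat f\equiv 0$ by surjectivity of $(\sigma,\omega)\mapsto\sigma\omega^\perp$, and Fourier inversion gives $f\equiv 0$. The extra care you take (continuity of $\hat f$, surjectivity of the parametrization) is fine but the paper treats these as immediate.
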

\begin{proof}
If $Rf \equiv 0$, then  $\hat{f} \equiv 0$ by Theorem \ref{thm_fourier_slice} and consequently $f \equiv 0$.
\end{proof}

To obtain a different inversion method, and for later purposes, we will consider the adjoint of $R$. The formal adjoint of $R$ is the \emph{backprojection operator}\footnote{The formula for $R^*$ is obtained as follows: if $f \in C^{\infty}_c(\mR^2)$, $h \in C^{\infty}(\mR \times S^1)$ one has  
\begin{align*}
(Rf, h)_{L^2(\mR \times S^1)} &= \int_{-\infty}^{\infty} \int_{S^1} Rf(s,\omega) \ol{h(s,\omega)} \,d\omega \,ds \\
 &= \int_{-\infty}^{\infty} \int_{S^1} \int_{-\infty}^{\infty} f(s\omega^{\perp} + t\omega) \ol{h(s,\omega)} \,dt \,d\omega \,ds \\
 &= \int_{\mR^2} f(y) \left( \int_{S^1} \ol{h(y \cdot \omega^{\perp}, \omega)} \,d\omega \right) \,dy.
\end{align*}}
 \begin{equation*}
R^*: C^{\infty}(\mR \times S^1) \to C^{\infty}(\mR^2), \ \ R^* h(y) = \int_{S^1} h(y \cdot \omega^{\perp}, \omega) \,d\omega.
\end{equation*}

The following result shows that the normal operator $R^* R$ is a classical $\Psi$DO of order $-1$ in $\mR^2$, and also gives an inversion formula.

\begin{theorem} \label{theorem_normal_operator_radon}
(Normal operator) One has 
\begin{equation*}
R^* R = 4\pi \abs{D}^{-1} = \mF^{-1} \left\{Ê\frac{4\pi}{\abs{\xi}} \mF(\,\cdot\,) \right\},
\end{equation*}
and $f$ can be recovered from $Rf$ by the formula 
\begin{equation*}
f = \frac{1}{4\pi} \abs{D} R^* R f.
\end{equation*}
\end{theorem}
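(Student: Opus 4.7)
The plan is to prove the identity $(R^*Rf)\ehat(\xi) = \frac{4\pi}{\abs{\xi}}\hat{f}(\xi)$ by combining the explicit formula for the backprojection operator with the Fourier slice theorem (Theorem \ref{thm_fourier_slice}). Once this Fourier-side identity is established, the inversion formula $f = \frac{1}{4\pi}\abs{D}R^*Rf$ follows immediately by applying $\mF^{-1}\{\abs{\xi}\,\cdot\,\}$ to both sides.

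First I would write
\[
R^*Rf(y) = \int_{S^1} Rf(y\cdot\omega^{\perp},\omega)\,d\omega,
\]
and recover $Rf(s,\omega)$ from its $s$-Fourier transform via inversion in one variable, using the Fourier slice theorem to replace $(Rf)\etilde(\sigma,\omega)$ by $\hat{f}(\sigma\omega^{\perp})$. This yields
\[
R^*Rf(y) = \frac{1}{2\pi}\int_{S^1}\int_{-\infty}^{\infty} e^{i\sigma\, y\cdot\omega^{\perp}}\,\hat{f}(\sigma\omega^{\perp})\,d\sigma\,d\omega.
\]

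Next I would convert this integral over $(\sigma,\omega) \in \mR \times S^1$ into an integral over $\xi \in \mR^2$ via the substitution $\xi = \sigma\omega^{\perp}$. The key observation is that this map is 2-to-1 onto $\mR^2\setminus\{0\}$ (since $(-\sigma,-\omega)$ yields the same $\xi$ as $(\sigma,\omega)$), and that in polar coordinates $d\xi = \abs{\xi}\,d\abs{\xi}\,d\theta$, so on each sheet (say $\sigma>0$) we have $d\sigma\,d\omega = d\xi/\abs{\xi}$. Combining the two sheets produces a factor of $2$, giving
\[
R^*Rf(y) = \frac{1}{\pi}\int_{\mR^2} e^{iy\cdot\xi}\,\frac{\hat{f}(\xi)}{\abs{\xi}}\,d\xi = (2\pi)^{-2}\int_{\mR^2} e^{iy\cdot\xi}\,\frac{4\pi}{\abs{\xi}}\,\hat{f}(\xi)\,d\xi,
\]
which is exactly $\mF^{-1}\{\tfrac{4\pi}{\abs{\xi}}\hat{f}\}(y)$, as desired.

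The only delicate point, and the one I would double-check carefully, is the bookkeeping in the substitution $\xi = \sigma\omega^{\perp}$: making sure the 2-to-1 nature of the parametrization is accounted for correctly (which is where the factor $4\pi$ comes from rather than $2\pi$), and that the Jacobian $d\sigma\,d\omega = d\xi/\abs{\xi}$ is handled properly. Beyond this bookkeeping, the argument is essentially just Fubini plus Fourier inversion, and the presence of $\abs{\xi}^{-1}$ (which encodes the order $-1$ smoothing of $R^*R$ as a classical $\Psi$DO) appears naturally from the Jacobian of the polar change of variables.
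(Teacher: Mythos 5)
Your argument is correct, and the constant checks out: the factor $2$ from the two-to-one parametrization $\xi=\sigma\omega^{\perp}$ combined with the $(2\pi)^{-1}$ from one-dimensional Fourier inversion and the $(2\pi)^{-2}$ in the definition of $\mF^{-1}$ on $\mR^2$ indeed produces $4\pi/\abs{\xi}$, and the Jacobian $d\sigma\,d\omega = d\xi/\abs{\xi}$ is handled properly. This is essentially the paper's proof — same key ingredients (Fourier slice theorem, polar change of variables, factor of $2$ from the double cover) — with the only cosmetic difference that the paper computes the quadratic form $(Rf,Rg)_{L^2(\mR\times S^1)}$ via the Parseval identity and reads off $R^*R$ by duality, whereas you compute $R^*Rf(y)$ pointwise via one-dimensional Fourier inversion in $s$ (legitimate here since $Rf(\cdot,\omega)$ is Schwartz for $f\in C^{\infty}_c(\mR^2)$ and $\hat f(\xi)/\abs{\xi}$ is locally integrable in the plane).
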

\begin{remark}
Above we have written, for $\alpha \in \mR$, 
\[
\abs{D}^{\alpha} f := \mF^{-1} \{ \abs{\xi}^{\alpha} \hat{f}(\xi) \}.
\]
The notation $(-\Delta)^{\alpha/2} = \abs{D}^{\alpha}$ is also used.
\end{remark}
\begin{proof}
The proof is based on computing $(Rf, Rg)_{L^2(\mR \times S^1)}$ using the Parseval identity, Fourier slice theorem, symmetry and polar coordinates:
\begin{align*}
(R^* R f, g)_{L^2(\mR^2)} &= (Rf, Rg)_{L^2(\mR \times S^1)} \\
 &= \int_{S^1} \left[ \int_{-\infty}^{\infty} (Rf)(s,\omega) \ol{(Rg)(s,\omega)} \,ds \right] \,d\omega \\
 &= \frac{1}{2\pi} \int_{S^1} \left[ \int_{-\infty}^{\infty} (Rf)\etilde(\sigma,\omega) \ol{(Rg)\etilde(\sigma,\omega)} \right] \,d\sigma \,d\omega \\
 &= \frac{1}{2\pi} \int_{S^1} \left[ \int_{-\infty}^{\infty}  \hat{f}(\sigma \omega^{\perp})  \ol{\hat{g}(\sigma \omega^{\perp})} \right] \,d\sigma \,d\omega \\
 &= \frac{2}{2\pi} \int_{S^1} \left[ \int_{0}^{\infty}  \hat{f}(\sigma \omega^{\perp}) \ol{\hat{g}(\sigma \omega^{\perp})} \right] \,d\sigma \,d\omega \\
 &= \frac{2}{2\pi} \int_{\mR^2} \frac{1}{\abs{\xi}} \hat{f}(\xi) \ol{\hat{g}(\xi)} \,d\xi \\
 &= ( 4\pi \mF^{-1} \left\{ \frac{1}{\abs{\xi}} \hat{f}(\xi) \right\}, g)_{L^2(\mR^2)}. \qedhere
\end{align*}
\end{proof}

The same argument, based on computing $(\abs{D_s}^{1/2} Rf, \abs{D_s}^{1/2} Rg)_{L^2(\mR \times S^1)}$ instead of $(Rf, Rg)_{L^2(\mR \times S^1)}$, leads to the famous \emph{filtered backprojection} (FBP) inversion formula:
\[
f = \frac{1}{4\pi} R^* \abs{D_s} R f
\]
where $\abs{D_s} Rf = \mF^{-1} \{Ê\abs{\sigma} (Rf) \etilde \}$. This formula is efficient to implement and gives good reconstructions when one has complete X-ray data and relatively small noise, and hence FBP (together with its variants) has been commonly used in X-ray CT scanners.

However, if one is mainly interested in the singularities (i.e.\ jumps or sharp features) of the image, it is possible to use the even simpler \emph{backprojection method}: just apply the backprojection operator $R^*$ to the data $Rf$. Since $R^* R$ is an elliptic $\Psi$DO, Theorem \ref{thm_elliptic_regularity_singsupp} guarantees that the singularities are recovered:
\[
\mathrm{sing\,supp}(R^* R f) = \mathrm{sing\,supp}(f).
\]
Moreover, since $R^* R$ is a $\Psi$DO of order $-1$, hence smoothing of order $1$, one expects that $R^* R f$ gives a slightly blurred version of $f$ where the main singularities should still be visible.

\subsection{Visible singularities}

There are various imaging situations where complete X-ray data (i.e.\ the function $Rf(s,\omega)$ for all $s$ and $\omega$) is not available. This is the case for limited angle tomography (e.g.\ in luggage scanners at airports, or dental applications), region of interest tomography, or exterior data tomography. In such cases explicit inversion formulas such as FBP are usually not available, but microlocal analysis (for related normal operators or FIOs) still provides a powerful paradigm for predicting which singularities can be recovered stably from the measurements. 


We will try to explain this paradigm a little bit more, starting with an example:

\begin{example}
Let $f$ be the characteristic function of the unit disc $\mD$, i.e.\ $f(x) = 1$ if $\abs{x} \leq 1$ and $f(x) = 0$ for $\abs{x} > 1$. Then $f$ is singular precisely on the unit circle (in normal directions). We have 
\[
Rf(s,\omega) = \left\{ \begin{array}{cl} 2\sqrt{1-s^2}, & s \leq 1, \\[5pt] 0, & s > 1. \end{array} \right.
\]
Thus $Rf$ is singular precisely at those points $(s,\omega)$ with $\abs{s}=1$, which correspond to those lines that are tangent to the unit circle.
\end{example}

There is a similar relation between the singularities of $f$ and $Rf$ in general, and this is explained by microlocal analysis:

\begin{theorem}
The operator $R$ is an elliptic FIO of order $-1/2$. There is a precise relationship between the singularities of $f$ and singularities of $Rf$.
\end{theorem}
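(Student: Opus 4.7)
The plan is to exhibit $R$ explicitly as an oscillatory integral in standard FIO form and then read off its order, ellipticity, and canonical relation, after which the singularity statement will follow from the general FIO rule $WF(Rf) \subset C(WF(f))$ together with a parametrix argument.

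First I would write the Schwartz kernel of $R$ as $K(s,\omega,y) = \delta(s - y\cdot\omega^\perp)$ and insert the Fourier representation
\[
\delta(s - y\cdot\omega^\perp) = \frac{1}{2\pi} \int_{\mR} e^{i\sigma(s - y\cdot\omega^\perp)}\,d\sigma,
\]
so that
\[
Rf(s,\omega) = \frac{1}{2\pi} \int_{\mR^2} \int_{\mR} e^{i\sigma(s-y\cdot\omega^\perp)}\,d\sigma\, f(y)\,dy.
\]
This has phase $\varphi(s,\omega,y,\sigma) = \sigma(s - y\cdot\omega^\perp)$, amplitude $a \equiv 1 \in S^0$, and one phase variable $\sigma$. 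A routine check shows that $\varphi$ is homogeneous of degree one in $\sigma$ and non-degenerate on the critical set $\{\p_\sigma \varphi = 0\} = \{s = y\cdot\omega^\perp\}$, so $\varphi$ is a valid FIO phase. Applying the standard order formula $m + N/2 - (n_1 + n_2)/4$ with $m=0$, $N=1$, $n_1 = n_2 = 2$ yields order $-1/2$; ellipticity is immediate since the amplitude is identically $1$.

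Next I would compute the canonical relation. With $\omega = (\cos\theta,\sin\theta)$ one has $\p_s\varphi = \sigma$, $\p_\theta \varphi = \sigma\, y\cdot\omega$, and $-\nabla_y\varphi = \sigma\omega^\perp$, giving
\[
C = \bigl\{ (s,\omega;\,\sigma, \sigma y\cdot\omega;\, y,\, \sigma\omega^\perp) \,:\, s = y\cdot\omega^\perp,\ \sigma \neq 0 \bigr\} \subset (T^*(\mR\times S^1)\setminus 0) \times (T^*\mR^2\setminus 0).
\]
Geometrically, a covector $(y,\xi) \in T^*\mR^2 \setminus 0$ is related by $C$ to covectors over $(s,\omega)$ precisely when the line parametrized by $(s,\omega)$ passes through $y$ perpendicular to $\xi$, the two choices $\sigma = \pm|\xi|$ (with $\omega^\perp = \pm\xi/|\xi|$) reflecting the symmetry $(s,\omega)\leftrightarrow(-s,-\omega)$ of $R$.

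The singularity relation then follows by applying $WF(Rf) \subset C(WF(f))$ and using ellipticity of $R$ (which produces a parametrix $Q$ with $QR = \id$ modulo smoothing) to obtain the reverse inclusion $WF(f) \subset C^t(WF(Rf))$. In words, a singularity of $f$ at $(y,\xi)$ appears in $Rf$ exactly over those $(s,\omega)$ whose line passes through $y$ in the direction perpendicular to $\xi$, and conversely. The main technical obstacle I expect is the bookkeeping on the $S^1$ factor: one must use a local chart for the circle when computing $\p_\omega \varphi$, and check that $C$ is (modulo the $\mZ/2$ symmetry) the graph of a local canonical transformation so that the FIO calculus and its action on wave front sets apply cleanly.
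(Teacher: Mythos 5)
Your proposal is correct, and in fact the paper deliberately gives no proof of this theorem (it says only ``we will not spell out the precise relationship here'' and points to the references), so what you have written supplies exactly the standard argument that is being omitted. The kernel identity $Rf(s,\omega)=\int_{\mR^2}\delta(s-y\cdot\omega^{\perp})f(y)\,dy$ is right (check it in the coordinates $y=s'\omega^{\perp}+t\omega$), the phase $\sigma(s-y\cdot\omega^{\perp})$ is a non-degenerate phase function homogeneous of degree one in the single frequency variable $\sigma$, and the order count $0+\tfrac12-\tfrac{2+2}{4}=-\tfrac12$ and the canonical relation (the twisted conormal bundle of the point--line incidence set $\{s=y\cdot\omega^{\perp}\}$) are both correct; your formula $\p_\theta\varphi=\sigma\,y\cdot\omega$ follows from $\p_\theta\omega^{\perp}=-\omega$. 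Your geometric reading of $C$ matches the example in the text: a singularity of $f$ at $(y,\xi)$ is seen in $Rf$ exactly over the line through $y$ perpendicular to $\xi$, which is why $R\chi_{\mD}$ is singular precisely at the tangent lines $|s|=1$. Two small points to tighten: the amplitude is $\tfrac{1}{2\pi}$ rather than $1$ (irrelevant for order and ellipticity), and for the converse inclusion $WF(f)\subset C^{t}(WF(Rf))$ you can avoid an abstract parametrix construction entirely by using the paper's exact inversion formula $f=\tfrac{1}{4\pi}|D|R^{*}Rf$: the left inverse $Q=\tfrac{1}{4\pi}|D|R^{*}$ is an elliptic $\Psi$DO composed with the FIO $R^{*}$, whose canonical relation is $C^{t}$, and the composition is licit because, as you note, $C$ satisfies the Bolker condition (the projection $C\to T^{*}(\mR\times S^1)$ is an injective immersion, since $\sigma$ and $\tau=\sigma\,y\cdot\omega$ together with $s=y\cdot\omega^{\perp}$ recover $y$).
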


We will not spell out the precise relationship here, but only give some consequences. It will be useful to think of the Radon transform as defined on the set of (non-oriented) lines in $\mR^2$. If $\mA$ is an open subset of lines in $\mR^2$, we consider the Radon transform $Rf|_{\mA}$ restricted to lines in $\mA$. Recovering $f$ (or some properties of $f$) from $Rf|_{\mA}$ is a \emph{limited data} tomography problem. Examples:

\begin{itemize}
\item 
If $\mA = \{ \text{lines not meeting $\ol{\mD}$} \}$, then $Rf|_{\mA}$ is called \emph{exterior data}.
\item 
If $0 < a < \pi/2$ and $\mA = \{ \text{lines whose angle with $x$-axis is $< a$} \}$, then $Rf|_{\mA}$ is called \emph{limited angle data}.
\end{itemize}

It is known that any $f \in C^{\infty}_c(\mR^2 \setminus \ol{D})$ is uniquely determined by exterior data (Helgason support theorem), and any $f \in C^{\infty}_c(\mR^2)$ is uniquely determined by limited angle data (Fourier slice and Paley-Wiener theorems). However, both inverse problems are very unstable (inversion is not Lipschitz continuous in any Sobolev norms, but one has conditional logarithmic stability).

\begin{definition}
A singularity at $(x_0, \xi_0)$ is called \emph{visible from $\mA$} if the line through $x_0$ in direction $\xi_0^{\perp}$ is in $\mA$.
\end{definition}

One has the following dichotomy:

\begin{itemize}
\item 
If $(x_0, \xi_0)$ is visible from $\mA$, then from the singularities of $Rf|_{\mA}$ one can determine for any $\alpha$ whether or not $(x_0, \xi_0) \in WF^{\alpha}(f)$. If $Rf|_{\mA}$ uniquely determines $f$, one expects the reconstruction of visible singularities to be stable.
\item 
If $(x_0, \xi_0)$ is not visible from $\mA$, then this singularity is smoothed out in the measurement $Rf|_{\mA}$. Even if $Rf|_{\mA}$ would determine $f$ uniquely, the inversion is not Lipschitz stable in any Sobolev norms.
\end{itemize}

\section{Gel'fand problem} \label{sec_gelfand}

Seismic imaging gives rise to various inverse problems related to determining interior properties, e.g.\ oil deposits or deep structure, of the Earth. Often this is done by using acoustic or elastic waves. We will consider the following problem, also known as the \emph{inverse boundary spectral problem} (see the monograph \cite{KKL}):

\begin{quote}
{\bf Gel'fand problem:} Is it possible to determine the interior structure of Earth by controlling acoustic waves and measuring vibrations at the surface?
\end{quote}

In seismic imaging one often tries to recover an unknown sound speed. However, in this presentation we consider the simpler case where the sound speed is constant (equal to one) and one attempts to recover an unknown potential $q \in C^{\infty}_c(\Omega)$ at each point $x \in \Omega$, where $\Omega$ is a ball in $\mR^n$.

Consider the free wave operator 
\[
\Box := \p_t^2 - \Delta.
\]
We assume that the medium is at rest at time $t=0$ and that we take measurements until time $T > 0$. If we prescribe the amplitude of the wave to be $f(x,t)$ on $\p \Omega \times (0,T)$, this leads to a solution $u$ of the wave equation 
\begin{equation} \label{wavedp}
\left\{ \begin{array}{rll}
(\Box + q) u &\!\!\!= 0 & \quad \text{in } \Omega \times (0,T), \\
u &\!\!\!= f & \quad \text{on } \partial \Omega \times (0,T), \\
u = \p_t u &\!\!\!= 0 & \quad \text{on } \{ t = 0 \}.
\end{array} \right.
\end{equation}
Given any $f \in C^{\infty}_c(\p \Omega \times (0,T))$, there is a unique solution $u \in C^{\infty}(\Omega \times (0,T))$ (see \cite[Theorem 7 in \S 7.2.3]{Evans}). We assume that we can measure the normal derivative $\p_{\nu} u|_{\p \Omega \times (0,T)}$, where $\p_{\nu} u(x,t) = \nabla_x u(x,t) \cdot \nu(x)$ and $\nu$ is the outer unit normal to $\p \Omega$. Doing such measurements for many different functions $f$, the ideal boundary measurements are encoded by the \emph{hyperbolic Dirichlet-to-Neumann map} (DN map for short) 
\[
\Lambda_q: C^{\infty}_c(\p \Omega \times (0,T)) \to C^{\infty}(\p \Omega \times (0,T)), \ \ \Lambda_q(f) = \p_{\nu} u|_{\p \Omega \times (0,T)}.
\]
The Gel'fand problem for this model amounts to recovering $q(x)$ from the knowledge of the map $\Lambda_q$. We will prove the following result due to \cite{RakeshSymes}.

\begin{theorem}[Recovering the X-ray transform] \label{thm_gelfand_xray}
Let $T > 0$ and assume that $q_1, q_2 \in C^{\infty}_c(\Omega)$. If 
\[
\Lambda_{q_1} = \Lambda_{q_2},
\]
then $q_1$ and $q_2$ satisfy 
\[
\int_{\gamma} q_1 \,ds = \int_{\gamma} q_2 \,ds
\]
whenever $\gamma$ is a maximal line segment in $\ol{\Omega}$ with length $< T$.
\end{theorem}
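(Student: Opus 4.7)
The plan is to convert the hypothesis $\Lambda_{q_1}=\Lambda_{q_2}$ into a bilinear orthogonality relation, and then test it against two families of geometric-optics quasimodes propagating in a prescribed direction $\omega\in S^{n-1}$, arranged so that their product concentrates on a single line segment.

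First I would derive the identity. Given $f\in C^\infty_c(\p\Omega\times(0,T))$, let $u_j$ solve \eqref{wavedp} with $q=q_j$, and set $w:=u_1-u_2$. Then $(\Box+q_2)w=(q_2-q_1)u_1$ in $\Omega\times(0,T)$, with $w=\p_t w=0$ at $t=0$, $w=0$ on $\p\Omega\times(0,T)$, and $\p_\nu w=\Lambda_{q_1}f-\Lambda_{q_2}f=0$ there by hypothesis. Pairing with any $v$ satisfying $(\Box+q_2)v=0$ in $\Omega\times(0,T)$ and $v=\p_t v=0$ at $t=T$, every spatial and temporal boundary contribution in the integration by parts vanishes, yielding
\[
\int_{\Omega\times(0,T)}(q_1-q_2)(x)\,u_1(x,t)\,v(x,t)\,dx\,dt=0
\]
for every such pair.

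Next I would construct WKB quasimodes. The phase $\phi(x,t):=x\cdot\omega-t$ satisfies the eikonal equation $(\p_t\phi)^2=|\nabla_x\phi|^2$, and a direct computation gives
\[
(\Box+q)(e^{\pm i\lambda\phi}c)=e^{\pm i\lambda\phi}\bigl[(\Box+q)c\mp 2i\lambda(\p_t+\omega\cdot\nabla)c\bigr].
\]
Choosing amplitudes of transport-invariant form $a(x,t)=a_0(x-t\omega)$ and $b(x,t)=b_0(x-t\omega)$ kills the $O(\lambda)$ term. A finite-order WKB expansion $A_\lambda=a+\lambda^{-1}a^{(1)}+\cdots+\lambda^{-N}a^{(N)}$, with each $a^{(k)}$ obtained by solving an inhomogeneous transport equation from zero Cauchy data at $t=0$, drives the residual to $O(\lambda^{-N})$; a standard wave-equation energy estimate then produces genuine solutions $u_\lambda=e^{i\lambda\phi}A_\lambda+R_\lambda$ of $(\Box+q_1)u_\lambda=0$ and $v_\lambda=e^{-i\lambda\phi}B_\lambda+S_\lambda$ of $(\Box+q_2)v_\lambda=0$ with zero Cauchy data at $t=0$ and $t=T$ respectively, and $\|R_\lambda\|_{L^2}, \|S_\lambda\|_{L^2}=O(\lambda^{-1})$. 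Matching the vanishing Cauchy data forces $a_0=0$ on $\overline\Omega$ and $b_0=0$ on $\overline\Omega-T\omega$.

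Substituting into the identity, the oscillatory factors in $u_\lambda v_\lambda$ cancel, leaving the non-oscillatory leading part $A_\lambda B_\lambda\to a_0(x-t\omega)b_0(x-t\omega)$ plus cross terms with one factor of $R_\lambda$ or $S_\lambda$ (each $O(\lambda^{-1})$ by Cauchy--Schwarz). Passing $\lambda\to\infty$ and changing variables $(y,s)=(x-t\omega,t)$ gives
\[
\int_{\mR^n}a_0(y)b_0(y)\left(\int_{\{s\in(0,T):\,y+s\omega\in\Omega\}}(q_1-q_2)(y+s\omega)\,ds\right)dy=0.
\]
Given a maximal segment $\gamma=\{y_0+s\omega:\,s\in(\alpha_0,\beta_0)\}\subset\overline\Omega$ with $\beta_0-\alpha_0<T$, translating along $\omega$ places $0<\alpha_0<\beta_0<T$, so that both $y_0$ and $y_0+T\omega$ lie outside $\overline\Omega$; taking $a_0=b_0$ a mollifier of small radius $\eps$ centered at $y_0$ satisfies both admissibility conditions, and sending $\eps\to 0$ extracts $\int_{\alpha_0}^{\beta_0}(q_1-q_2)(y_0+s\omega)\,ds=0$, the required X-ray identity along $\gamma$. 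The main technical obstacle is the WKB construction together with its remainder estimate: one must iterate the transport equations to sufficient order and combine a wave-equation energy inequality with finite speed of propagation to control both the effective support and the $L^2$-size of the correctors $R_\lambda, S_\lambda$.
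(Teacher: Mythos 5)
Your proposal is correct and follows essentially the same route as the paper: an integral identity obtained from $\Lambda_{q_1}=\Lambda_{q_2}$ (Lemma \ref{lemma_gelfand_integral_identity}, which the paper derives via the adjoint of the DN map rather than via $w=u_1-u_2$, but the two derivations are equivalent), followed by geometric optics quasimodes with phase linear along the light ray over the segment and transport-invariant amplitudes, with the vanishing Cauchy data at $t=0$ and $t=T$ enforced by supporting the amplitudes outside $\ol{\Omega}$ and $\ol{\Omega}-T\omega$. The one genuine difference is the limiting procedure: the paper couples the amplitude width to the frequency ($\eps=\lambda^{-1/(n+8)}$ in Proposition \ref{prop_gelfand_concentrating_solutions}) and takes a single limit, which forces it to track how $\norm{\chi}_{W^{4,\infty}}$ blows up with $\lambda$; your decoupled version (fix $\eps$, let $\lambda\to\infty$ to get a tube average of parallel line integrals, then let $\eps\to 0$ using continuity of the X-ray transform of $q_1-q_2$) avoids that bookkeeping and is a slightly cleaner way to organize the same argument.
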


It is natural that the region where one can recover information depends on $T$. By finite propagation speed the map $\Lambda_q$ is unaffected if one changes $q$ outside the set \footnote{If $u$ and $\tilde{u}$ solve \eqref{wavedp} for potentials $q$ and $\tilde{q}$ with the same Dirichlet data $f$, and if $q = \tilde{q}$ in $U := \{x \in \Omega \,;\, \mathrm{dist}(x,\p \Omega) < T/2 \}$, then $w := u - \tilde{u}$ solves $(\Box + q)w = F$ where $F := -(q-\tilde{q})\tilde{u}$ vanishes in $U \times (0,T)$ and in $(\Omega \setminus U) \times (0,T/2)$. Moreover, $w = \p_t w = 0$ on $\{ t=0 \}$ and $w|_{\p \Omega \times (0,T)} = 0$. By finite speed of propagation $\p_{\nu} w|_{\p \Omega \times (0,T)} = 0$. This proves that $\Lambda_q = \Lambda_{\tilde{q}}$.}
\[
\{x \in \Omega \,;\, \mathrm{dist}(x,\p \Omega) < T/2 \}.
\]
For $T$ large enough, one can recover everything:

\begin{corollary}
If $T > \mathrm{diam}(\Omega)$, then $\Lambda_{q_1} = \Lambda_{q_2}$ implies $q_1 \equiv q_2$.
\end{corollary}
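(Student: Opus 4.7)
The plan is to reduce the corollary directly to Theorem \ref{thm_gelfand_xray} combined with injectivity of the X-ray transform on compactly supported smooth functions. Set $q := q_1 - q_2$, extended by zero to all of $\mR^n$; since both $q_i$ lie in $C^{\infty}_c(\Omega)$, so does $q$.

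First I would note that any line $\ell \subset \mR^n$ that meets $\Omega$ intersects the closed ball $\ol{\Omega}$ in a line segment $\gamma$ of length at most $\mathrm{diam}(\Omega) < T$. Since $q$ is supported in $\Omega$, the integral of $q$ along the full line $\ell$ equals its integral along $\gamma$. Theorem \ref{thm_gelfand_xray} then gives $\int_{\gamma} q_1 \,ds = \int_{\gamma} q_2 \,ds$, so $\int_{\ell} q \,ds = 0$. For lines not meeting $\Omega$, the integral is trivially zero. Hence the X-ray transform of $q$ vanishes on every line in $\mR^n$.

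The remaining step is to conclude $q \equiv 0$ from the vanishing of all line integrals. In dimension $n = 2$ this is exactly Corollary \ref{corollary_radon_injectivity}. For $n \geq 3$, I would reduce to the planar case: restrict $q$ to an arbitrary affine $2$-plane $P \subset \mR^n$, obtaining a function $q|_P \in C^{\infty}_c(P)$ whose $2$-dimensional Radon transform vanishes identically (because lines in $P$ are lines in $\mR^n$). Corollary \ref{corollary_radon_injectivity} applied in $P$ gives $q|_P \equiv 0$, and since $P$ is arbitrary this forces $q \equiv 0$ in $\mR^n$. Alternatively, one can invoke the Fourier slice theorem directly: vanishing of all line integrals implies $\hat q$ vanishes on every hyperplane through the origin (by integrating first in the line direction and then transversely), hence everywhere.

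No step looks like a serious obstacle; the only thing worth being careful about is the trivial but necessary observation that $q_i$ may be extended by zero outside $\Omega$ without loss of smoothness, so that integrals over maximal segments in $\ol{\Omega}$ and over full lines coincide, allowing the application of a global injectivity theorem.
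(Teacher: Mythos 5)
Your proposal is correct and follows essentially the same route as the paper: apply Theorem \ref{thm_gelfand_xray} to all maximal segments (whose lengths are bounded by $\mathrm{diam}(\Omega) < T$), then invoke injectivity of the X-ray transform via Corollary \ref{corollary_radon_injectivity} in the plane and reduction to two-planes when $n \geq 3$. Your version just spells out the details (extension by zero, restriction to affine $2$-planes) that the paper compresses into ``tiling $\mR^n$ by two-planes.''
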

\begin{proof}
If $T > \mathrm{diam}(\Omega)$, then by Theorem \ref{thm_gelfand_xray} one has 
\[
\int_{\gamma} q_1 \,ds = \int_{\gamma} q_2 \,ds
\]
for any maximal line segment $\gamma$ in $\ol{\Omega}$. Thus $q_1$ and $q_2$ have the same X-ray transform in $\mR^n$. This transform is injective by Corollary \ref{corollary_radon_injectivity} when $n=2$. Tiling $\mR^n$ by two-planes gives injectivity when $n \geq 3$. Thus $q_1=q_2$.
\end{proof}

Theorem \ref{thm_gelfand_xray} could be proved based on the following facts, see e.g.\ \cite{StefanovYang}:
\begin{enumerate}
\item[1.]
The map $\Lambda_q$ is an FIO of order $1$ on $\p \Omega \times (0,T)$.
\item[2.]
The X-ray transform of $q$ can be read off from the symbol of $\Lambda_q$ (more precisely, from the principal symbol of $\Lambda_q - \Lambda_0$).
\end{enumerate}
We will give an elementary proof that is based on testing $\Lambda_q$ against highly oscillatory boundary data (compare with \eqref{principal_symbol_formula}).

The first step is an integral identity.

\begin{lemma}[Integral identity] \label{lemma_gelfand_integral_identity}
Assume that $q_1, q_2 \in C^{\infty}_c(\Omega)$. For any $f_1, f_2 \in C^{\infty}_c(\p \Omega \times (0,T))$, one has  
\[
((\Lambda_{q_1} - \Lambda_{q_2}) f_1, f_2)_{L^2(\p \Omega \times (0,T))} = \int_{\Omega} \int_0^T (q_1-q_2) u_1 \bar{u}_2 \,dt \,dx
\]
where $u_1$ solves \eqref{wavedp} with $q=q_1$ and $f=f_1$, and $u_2$ solves an analogous problem with vanishing Cauchy data on $\{Êt=T \}$:
\begin{equation} \label{wavedptwo}
\left\{ \begin{array}{rll}
(\Box + q_2) u_2 &\!\!\!= 0 & \quad \text{in } \Omega \times (0,T), \\
u_2 &\!\!\!= f_2 & \quad \text{on } \partial \Omega \times (0,T), \\
u_2 = \p_t u_2 &\!\!\!= 0 & \quad \text{on } \{ t = T \}.
\end{array} \right.
\end{equation}
\end{lemma}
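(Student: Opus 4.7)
The plan is to reduce the identity to an integration by parts computation, where the key trick is to introduce the solution of the forward problem with potential $q_2$ but the same Dirichlet data $f_1$, and then compare.

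Let $u_1$ solve \eqref{wavedp} with $q=q_1$ and $f=f_1$, and let $\tilde{u}_1$ solve the analogous forward problem with $q=q_2$ and $f=f_1$. Set $w := u_1 - \tilde{u}_1$. First I would check that $w$ satisfies the inhomogeneous wave equation
\[
(\Box + q_2) w = (q_2 - q_1) u_1 \quad \text{in } \Omega \times (0,T),
\]
with zero Dirichlet data on $\p \Omega \times (0,T)$ and zero Cauchy data at $t=0$. By the definition of the DN map, the Neumann trace of $w$ on $\p\Omega \times (0,T)$ is exactly $(\Lambda_{q_1} - \Lambda_{q_2}) f_1$.

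Next I would pair $(\Box + q_2) w$ against $\bar{u}_2$ over $\Omega \times (0,T)$ and integrate by parts. The strategy is to move $\Box + q_2$ (a symmetric operator, since $q_2$ is real) off $w$ and onto $\bar u_2$, using Green's second identity in space and integration by parts twice in time. The time boundary contributions vanish because $w = \p_t w = 0$ at $t=0$ (from the initial conditions on $w$) and $u_2 = \p_t u_2 = 0$ at $t=T$ (from the terminal conditions \eqref{wavedptwo} on $u_2$) — this is precisely why the terminal-data backward problem for $u_2$ was chosen. The spatial boundary term from Green's identity reduces, using $w|_{\p\Omega} = 0$, to a single contribution $\int_0^T \int_{\p\Omega} \bar{u}_2 \, \p_\nu w \, dS\, dt$.

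Putting the pieces together, the integration by parts yields
\[
\int_0^T \!\! \int_\Omega (q_2 - q_1) u_1 \bar{u}_2 \, dx\, dt \,=\, \int_0^T \!\! \int_\Omega w\, \overline{(\Box + q_2)u_2}\, dx\, dt \,-\, \int_0^T \!\! \int_{\p\Omega} \bar{f}_2\, (\Lambda_{q_1} - \Lambda_{q_2}) f_1 \, dS\, dt,
\]
where the first term on the right vanishes because $u_2$ solves $(\Box + q_2) u_2 = 0$. Rearranging gives the claimed identity.

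I do not expect a serious obstacle here: the argument is a clean Green's identity calculation, and the only subtlety is bookkeeping the boundary terms. The main conceptual point worth emphasizing is the role of the time-reversed problem \eqref{wavedptwo}: introducing $u_2$ with vanishing Cauchy data at $t=T$ (rather than at $t=0$) is exactly what is needed to kill the temporal boundary terms, so that only the spatial boundary term involving the DN map survives. Smoothness of $u_1, u_2, w$ up to the boundary of $\Omega \times (0,T)$ — which justifies all the integrations by parts — follows from standard well-posedness for the wave equation with the given compatible smooth data.
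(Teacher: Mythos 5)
Your argument is correct: the signs work out (the $\partial_t^2$ boundary terms die because $w$ has zero Cauchy data at $t=0$ and $u_2$ has zero Cauchy data at $t=T$, the $-\Delta$ boundary term reduces to $-\int_0^T\int_{\partial\Omega}\partial_\nu w\,\bar u_2\,dS\,dt$ since $w$ vanishes on the lateral boundary, and the zeroth-order terms cancel because $q_2$ is real), and rearranging your final display indeed gives $((\Lambda_{q_1}-\Lambda_{q_2})f_1,f_2)=\int_\Omega\int_0^T(q_1-q_2)u_1\bar u_2$. However, your route differs from the paper's. You introduce the auxiliary forward solution $\tilde u_1$ with potential $q_2$ and data $f_1$, form the difference $w=u_1-\tilde u_1$ (which carries the inhomogeneous source $(q_2-q_1)u_1$ and the Neumann data $(\Lambda_{q_1}-\Lambda_{q_2})f_1$), and apply Green's second identity to the pair $(w,u_2)$. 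The paper instead never forms $w$: it first proves the adjoint relation $(\Lambda_q f,g)=(f,\Lambda_q^T g)$, where $\Lambda_q^T$ is the DN map of the time-reversed problem, by integrating by parts down to the symmetric bilinear form $\int_\Omega\int_0^T(\nabla u\cdot\nabla\bar v-\partial_t u\,\partial_t\bar v+qu\bar v)$; it then evaluates $(\Lambda_{q_1}f_1,f_2)$ directly and $(\Lambda_{q_2}f_1,f_2)=(f_1,\Lambda_{q_2}^T f_2)$ via the transpose, obtaining the same bilinear form with $q_1$ replaced by $q_2$, and subtracts so that the gradient and time-derivative terms cancel. The two computations are of comparable length; yours isolates the difference of DN maps as a single Neumann trace at the outset (a formulation that generalizes naturally to source-to-solution settings), while the paper's makes the symmetry $\Lambda_q^T$ explicit, which is reused conceptually when interpreting $\Lambda_q$ as an operator to be tested against oscillatory data. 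Your closing remark about why the terminal conditions in \eqref{wavedptwo} are exactly what kills the temporal boundary terms matches the role they play in the paper's computation as well.
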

\begin{proof}
We first compute the adjoint of the DN map: one has 
\[
( \Lambda_q f, g)_{L^2(\p \Omega \times (0,T))} = ( f, \Lambda_q^T g)_{L^2(\p \Omega \times (0,T))}
\]
where $\Lambda_q^T g = \p_{\nu} v|_{\p \Omega \times (0,T)}$ with $v$ solving $(\Box + q)v = 0$ so that $v|_{\p \Omega \times (0,T)} = g$ and $v = \p_t v = 0$ on $\{ t = T \}$. To prove this, we let $u$ be the solution of \eqref{wavedp} and integrate by parts:
\begin{align*}
( \Lambda_q f, g)_{L^2(\p \Omega \times (0,T))} &= \int_{\p \Omega} \int_0^T (\p_{\nu} u) \bar{v} \,dt \,dS \\
 &= \int_{\Omega} \int_0^T (\nabla u \cdot \nabla \bar{v} + (\Delta u) \bar{v}) \,dt \,dx \\
 &= \int_{\Omega} \int_0^T (\nabla u \cdot \nabla \bar{v} + (\p_t^2 u + qu) \bar{v}) \,dt \,dx \\
 &= \int_{\Omega} \int_0^T (\nabla u \cdot \nabla \bar{v} - \p_t u \p_t \bar{v} + qu \bar{v}) \,dt \,dx \\
 &= \int_{\Omega} \int_0^T (\nabla u \cdot \nabla \bar{v} + u (\ol{\p_t^2 v + qv}) ) \,dt \,dx \\
 &= \int_{\Omega} \int_0^T (\nabla u \cdot \nabla \bar{v} + u \Delta \ol{v}) \,dt \,dx \\
 &= \int_{\p \Omega} \int_0^T u \p_{\nu} \bar{v} \,dt \,dS \\
 &= ( f, \Lambda_q^T g)_{L^2(\p \Omega \times (0,T))}.
\end{align*}

Now, if $u_1$ and $u_2$ are as stated, the computation above gives 
\begin{align*}
( \Lambda_{q_1} f_1, f_2)_{L^2(\p \Omega \times (0,T))} 
 &= \int_{\Omega} \int_0^T (\nabla u_1 \cdot \nabla \bar{u}_2 - \p_t u_1 \p_t \bar{u}_2 + q_1 u_1 \bar{u}_2) \,dt \,dx \\
 \intertext{and} 
( \Lambda_{q_2} f_1, f_2)_{L^2(\p \Omega \times (0,T))} 
 &= ( f_1, \Lambda_{q_2}^T f_2)_{L^2(\p \Omega \times (0,T))} \\
 &= \int_{\Omega} \int_0^T (\nabla u_1 \cdot \nabla \bar{u}_2 - \p_t u_1 \p_t \bar{u}_2 + q_2 u_1 \bar{u}_2) \,dt \,dx. 
\end{align*}
The result follows by subtracting these two identities.
\end{proof}

The second step is to construct special solutions to the wave equation that concentrate near curves $s \mapsto (\gamma(s), s)$ where $\gamma$ is a line segment. These curves are projections to the $(x,t)$ variables of null bicharacteristic curves for $\Box$ (see Example \ref{example_bicharacteristic_wave}). Thus the following result is in line with Theorem \ref{thm_propagation_of_singularities} concerning propagation of singularities. The proof is based on a standard geometrical optics / WKB quasimode construction.

\begin{proposition}[Concentrating solutions] \label{prop_gelfand_concentrating_solutions}
Assume that $q \in C^{\infty}_c(\Omega)$, and let $\gamma: [\delta,L] \to \ol{\Omega}$ be a maximal line segment in $\ol{\Omega}$ with $0 < \delta < L < T$.
For any $\lambda \geq 1$ there is a solution $u = u_{\lambda}$ of $(\Box + q) u = 0$ in $\Omega \times (0,T)$ with $u = \p_t u = 0$ on $\{Êt = 0 \}$, such that for any $\psi \in C^{\infty}_c(\Omega \times [0,T])$ one has  
\begin{equation} \label{concentrating_solutions_first_limit}
\lim_{\lambda \to \infty} \int_{\Omega} \int_0^T \psi \abs{u}^2 \,dx \,dt  = \int_{\delta}^L \psi(\gamma(s), s) \,ds.
\end{equation}
Moreover, if $\tilde{q} \in C^{\infty}_c(\Omega)$, there is a solution $\tilde{u} = \tilde{u}_{\lambda}$ of $(\Box + \tilde{q})\tilde{u} = 0$ in $\Omega \times (0,T)$ with $\tilde{u} = \p_t \tilde{u} = 0$ on $\{Êt = T \}$, such that for any $\psi \in C^{\infty}_c(\Omega \times [0,T])$ one has  
\begin{equation} \label{concentrating_solutions_second_limit}
\lim_{\lambda \to \infty} \int_{\Omega} \int_0^T \psi u \ol{\tilde{u}} \,dt \,dx  = \int_{\delta}^L \psi(\gamma(s), s) \,ds.
\end{equation}
\end{proposition}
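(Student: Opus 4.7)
The plan is to carry out a standard WKB / Gaussian beam quasimode construction, setting $u_\lambda = v_\lambda + r_\lambda$ where $v_\lambda$ is an explicit oscillating ansatz concentrated on the null spacetime curve $\Gamma := \{(\gamma(s), s) : s \in [\delta, L]\}$ and $r_\lambda$ is a correction forcing $(\Box+q)u_\lambda = 0$ exactly, with vanishing Cauchy data at $t=0$. A parallel construction with the time cutoff arranged at $t=T$ instead produces $\tilde u_\lambda$.

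Writing $\gamma(s) = x_0 + s\omega$ with $|\omega|=1$, and using the transverse coordinates $y_\perp(x) = (I - \omega\omega^T)(x-x_0)$ and $y_\parallel(x,t) = (x-x_0)\cdot\omega - t$ so that $\Gamma = \{y_\perp = y_\parallel = 0\}$, I would take
\[
v_\lambda(x,t) = \pi^{-n/4}\,\lambda^{n/4}\,e^{i\lambda\Phi(x,t)}\,a(x,t,\lambda),
\]
where the complex phase $\Phi$ has leading part $(x\cdot\omega - t) + \tfrac{i}{2}(|y_\perp|^2 + y_\parallel^2)$, so that $\im\Phi \geq 0$ vanishes exactly on the time-line $\{y_\perp = y_\parallel = 0\}$ with positive-definite transverse Hessian in all $n$ normal directions; this is supplemented by polynomial corrections of order $\geq 3$ in $(y_\perp, y_\parallel)$ (obtained from linear transport ODEs along $\Gamma$, the first being the explicit $\tfrac{1}{2}|y_\perp|^2 t$) so that the eikonal $\Phi_t^2 = |\nabla_x\Phi|^2$ holds to arbitrarily high order on $\Gamma$. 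The amplitude is a finite WKB expansion $a = a_0 + \lambda^{-1}a_1 + \cdots + \lambda^{-N}a_N$ whose leading term $a_0(x,t) = e^{-i(n-1)t/2}\phi(t)$ satisfies the leading transport equation $(\partial_t + \omega\cdot\nabla)a_0 + \tfrac{i(n-1)}{2}a_0 = 0$ along $\Gamma$ (forced by $\Phi_{tt} - \Delta_x\Phi = -i(n-1)$ at $y = 0$). Here $\phi\in C^\infty_c((0,T))$ is identically $1$ on $[\delta, L]$ with $\phi(0) = \phi'(0) = 0$, ensuring that $v_\lambda$ has vanishing Cauchy data at $t=0$; the subsequent $a_j$ are determined recursively by first-order transport ODEs along $\Gamma$ constructed to cancel the lower-order residuals. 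This yields $\|(\Box+q)v_\lambda\|_{L^2(\Omega\times(0,T))} = O(\lambda^{-M})$ for any prescribed $M$ once $N$ is sufficiently large, while $\|v_\lambda\|_{L^2} = O(1)$ by a direct Gaussian integration.

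The correction $r_\lambda$ is defined as the solution of $(\Box+q)r_\lambda = -(\Box+q)v_\lambda$ on $\Omega\times(0,T)$ with $r_\lambda = \partial_t r_\lambda = 0$ at $t=0$ and $r_\lambda|_{\partial\Omega\times(0,T)} = 0$; the standard energy inequality for the wave equation yields $\|r_\lambda\|_{L^2} = O(\lambda^{-M}) = o(1)$. Limit \eqref{concentrating_solutions_first_limit} then follows from a direct computation: changing variables to $(y_\perp, y_\parallel, t)$ (unit Jacobian) and rescaling $(w, z) = \sqrt\lambda(y_\perp, y_\parallel)$ gives
\[
\int \psi\,|v_\lambda|^2\,dx\,dt \;\xrightarrow{\lambda\to\infty}\; \pi^{-n/2}\cdot\pi^{n/2}\int_0^T \psi(x_0+t\omega, t)\,|\phi(t)|^2\,dt \;=\; \int_\delta^L \psi(\gamma(s), s)\,ds,
\]
where the last equality uses that $\psi$ vanishes near $\partial\Omega$ (so, since $\Omega$ is a ball, the line $t \mapsto x_0 + t\omega$ exits $\ol\Omega$ for $t \notin [\delta, L]$ forcing $\psi(x_0 + t\omega, t) = 0$ there) together with $|\phi|^2 = 1$ on $[\delta, L]$; the cross terms involving $r_\lambda$ are $o(1)$ by Cauchy--Schwarz. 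For the second limit, construct $\tilde v_\lambda$ with the \emph{identical} phase $\Phi$ but an amplitude $\tilde a$ solving the transport equation for $\tilde q$, using a time cutoff $\tilde\phi$ that vanishes together with its derivative at $t = T$; the shared phase means $v_\lambda\overline{\tilde v_\lambda}$ carries no residual oscillation, only the Gaussian envelope $e^{-\lambda(|y_\perp|^2 + y_\parallel^2)}$, so the same rescaling yields \eqref{concentrating_solutions_second_limit}. The main technical subtlety is the recursive construction of the Gaussian-beam phase and amplitude corrections: although each transport ODE along $\Gamma$ is elementary (and globally solvable since $\Gamma$ is a short line segment), the combinatorial bookkeeping required to verify that the $L^2$-residual of $(\Box+q)v_\lambda$ can indeed be made $O(\lambda^{-M})$ for arbitrary $M$ --- and not merely pointwise small on $\Gamma$ --- constitutes the standard (and only nontrivial) content of the argument.
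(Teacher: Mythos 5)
Your overall strategy --- an oscillatory quasimode concentrated on the null curve $s\mapsto(\gamma(s),s)$, an exact solution obtained by adding a correction controlled by the energy inequality, and a Gaussian-type computation of the limits --- is the right one, but you have chosen the Gaussian-beam variant, whereas the paper's construction is more elementary: it uses the \emph{real, linear} phase $\varphi=t-x_n$, which solves the eikonal equation exactly and globally, and obtains the concentration not from $\im\Phi$ but from a $\lambda$-dependent amplitude $a_0=\chi(x',\tfrac{t-x_n}{2})$ with $\chi=\eps^{-n/2}\zeta(\cdot/\eps)$, $\eps=\lambda^{-1/(n+8)}$, an approximate identity in the variables transverse to the spacetime line. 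The transport equation there is a constant-coefficient ODE solved exactly, only two amplitude terms are needed, and a quasimode error of size $O(\lambda^{-1/2})$ already suffices for both limits. Your route can be made to work, but as written it has a genuine gap in the phase construction, not just bookkeeping.

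The phase $\Phi=y_\parallel+\tfrac{i}{2}(\abs{y_\perp}^2+y_\parallel^2)$ satisfies \emph{exactly} $\nabla_x\Phi\cdot\nabla_x\Phi-(\p_t\Phi)^2=-\abs{y_\perp}^2$, a second-order error on $\Gamma$; the corresponding term $\lambda^2\abs{y_\perp}^2 a\,e^{i\lambda\Phi}$ in $(\Box+q)v_\lambda$ has $L^2$ norm of size $\lambda^{2}\cdot\lambda^{n/4}\cdot\lambda^{-1-n/4}=\lambda$, so the residual is not even bounded. Crucially, this cannot be repaired by corrections ``of order $\geq 3$ in $(y_\perp,y_\parallel)$'': a cubic correction $\Phi_3$ enters the quadratic part of the eikonal only through $2\p_t\Phi_0\,\p_t\Phi_3-2\nabla_x\Phi_0\cdot\nabla_x\Phi_3$, and since $\p_t=\p_t^{\mathrm{expl}}-\p_{y_\parallel}$ while $\omega\cdot\nabla_x=\p_{y_\parallel}$ in your coordinates, these two contributions cancel at degree two. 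The standard fix is to let the transverse Hessian evolve, $\tfrac12 M(t)\abs{y_\perp}^2$ with $\dot M+M^2=0$, $M(0)=i$, i.e.\ $M(t)=(t+i)/(1+t^2)$ --- your $\tfrac12 t\abs{y_\perp}^2$ is only its first Taylor coefficient, and it is a quadratic, not cubic, correction. This changes $\Box\Phi$ on $\Gamma$ from $-i(n-1)$ to $-(n-1)M(t)$, so your transport equation and the formula $a_0=e^{-i(n-1)t/2}\phi(t)$ are also incorrect: the true leading amplitude has $\abs{a_0(t)}^2\propto(1+t^2)^{-(n-1)/2}$, which exactly compensates the spreading factor $(1+t^2)^{(n-1)/2}$ of the Gaussian in the limit computation. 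Your final computation omits both factors and reaches the correct answer only by accident. Two smaller points: the cutoff $\phi$ violates the leading transport equation at order $\lambda$ wherever $\phi'\neq 0$, which is harmless only if you arrange $\supp\phi'$ to lie in times when the beam center $\eta(t)$ is at a fixed positive distance from $\ol{\Omega}$ (possible since $\eta(t)\notin\ol{\Omega}$ for $t\notin[\delta,L]$), making those contributions $O(e^{-c\lambda})$ in $L^2(\Omega\times(0,T))$; and the higher amplitudes $a_j$ must also be arranged to vanish near $t=0$ so that the Cauchy data of $v_\lambda$ at $t=0$ really vanishes (or is exponentially small and absorbed into $r_\lambda$).
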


At this point it is easy to prove the main result:

\begin{proof}[Proof of Theorem \ref{thm_gelfand_xray}]
Using the assumption $\Lambda_{q_1} = \Lambda_{q_2}$ and Lemma \ref{lemma_gelfand_integral_identity}, we have 
\begin{equation} \label{gelfand_orthogonality_relation}
\int_{\Omega} \int_0^T (q_1-q_2) u_1 \ol{u}_2 \,dt \,dx = 0
\end{equation}
for any solutions $u_j$ of $(\Box + q_j) u_j = 0$ in $\Omega \times (0,T)$ so that $u_1 = \p_t u_1 = 0$ on $\{ t = 0 \}$, and $u_2 = \p_t u_2 = 0$ on $\{ t = T \}$.

Let $\gamma: [\delta,L] \to \ol{\Omega}$ be a maximal unit speed line segment in $\ol{\Omega}$ with $L < T$, and let $u_1 = u_{1,\lambda}$ be the solution constructed in Proposition \ref{prop_gelfand_concentrating_solutions} for the potential $q_1$ with $u_1 = \p_t u_1 = 0$ on $\{ t = 0 \}$. Moreover, let $u_2 = u_{2,\lambda}$ be the solution constructed in the end of Proposition \ref{prop_gelfand_concentrating_solutions} for the potential $q_2$ with $u_1 = \p_t u_1 = 0$ on $\{ t = T \}$. Taking the limit as $\lambda \to \infty$ in \eqref{gelfand_orthogonality_relation} and using \eqref{concentrating_solutions_second_limit} with $\psi(x,t) = (q_1-q_2)(x)$, we obtain that 
\[
\int_{\delta}^L (q_1-q_2)(\gamma(s)) \,ds = 0.
\]
Thus the integrals of $q_1$ and $q_2$ over maximal line segments of length $< T$ in $\ol{\Omega}$ are the same.
\end{proof}

\begin{proof}[Proof of Proposition \ref{prop_gelfand_concentrating_solutions}]
Let $\gamma: [\delta,L] \to \ol{\Omega}$ be a maximal unit speed line segment in $\ol{\Omega}$ with $L < T$, and let $\eta: \mR \to \mR^n$ be the unit speed line so that $\eta(s) = \gamma(s)$ for $s \in [\delta, L]$. Write $x_0 := \eta(0)$ and $\xi_0 := \dot{\eta}(0)$, so that $x_0Ê\notin \ol{\Omega}$ and $\gamma(s) = x_0 + (s+\delta)\xi_0$. After a translation and rotation, we may assume that $x_0 = 0$ and $\xi_0 = e_n$.

We first construct an approximate solution $v = v_{\lambda}$ for the operator $\Box + q$, having the form 
\[
v(x,t) = e^{i\lambda \varphi(x,t)} a(x,t)
\]
where $\varphi$ is a real phase function, and $a$ is an amplitude supported near the curve $s \mapsto (\eta(s), s)$. Note that 
\begin{align*}
\p_t(e^{i\lambda \varphi} u) &= e^{i\lambda \varphi}(\p_t + i \lambda \p_t \varphi) u, \\
\p_t^2(e^{i\lambda \varphi} u) &= e^{i\lambda \varphi}(\p_t + i \lambda \p_t \varphi)^2 u.
\end{align*}
Using a similar expression for $\p_{x_j}^2$, we compute 
\begin{align}
(\Box+q)(e^{i\lambda \varphi} a) &= e^{i\lambda \varphi}( (\p_t + i\lambda \p_t \varphi)^2 - (\nabla_x + i \lambda \nabla_x \varphi)^2 + q) a \notag \\
 &= e^{i\lambda \varphi} \big[ \lambda^2 \left[ \abs{\nabla_x \varphi}^2 - (\p_t \varphi)^2 \right] a \notag \\
 & \qquad + i \lambda \left[2 \p_t \varphi \p_t a - 2 \nabla_x \varphi \cdot \nabla_x a + (\Box \varphi)a \right] + (\Box + q)a \big]. \label{wave_equation_geometric_optics}
\end{align}

We would like to have $(\Box+q)(e^{i\lambda \varphi} a) = O(\lambda^{-1})$. To this end, we first choose $\varphi$ so that the $\lambda^2$ term in \eqref{wave_equation_geometric_optics} vanishes. This will be true if $\varphi$ solves the \emph{eikonal equation} 
\[
\abs{\nabla_x \varphi}^2 - (\p_t \varphi)^2 = 0.
\]
There are many possible solutions, but we make the simple choice 
\[
\varphi(x,t) := t - x_n.
\]
With this choice, \eqref{wave_equation_geometric_optics} becomes 
\begin{equation} \label{wave_equation_geometric_optics_two}
(\Box+q)(e^{i\lambda \varphi} a) = e^{i\lambda \varphi} \left[ i\lambda (La) + (\Box+q)a \right]
\end{equation}
where $L$ is the constant vector field 
\[
L := 2 (\p_t + \p_{x_n}).
\]
It is convenient to consider new coordinates $(x',z,w)$ in $\mR^{n+1}$, where 
\begin{equation} \label{new_z_w_coordinates}
z = \frac{t+x_n}{2}, \qquad w = \frac{t-x_n}{2}.
\end{equation}
Then $L$ corresponds to $2 \p_z$ in the sense that 
\[
LF(x,t) = 2 \p_z \breve{F}(x',\frac{t+x_n}{2},\frac{t-x_n}{2})
\]
where $\breve{F}$ corresponds to $F$ in the new coordinates:
\[
\breve{F}(x',z,w) := F(x',z-w,z+w).
\]

We next look for the amplitude $a$ in the form 
\[
a = a_0 + \lambda^{-1} a_{-1}.
\]
Inserting this to \eqref{wave_equation_geometric_optics} and equating like powers of $\lambda$, we get 
\begin{equation} \label{wave_equation_geometric_optics_three}
(\Box+q)(e^{i\lambda \varphi} a) = e^{i\lambda \varphi} \left[ i\lambda (La_0) + \left[ i L a_{-1} + (\Box+q)a_0 \right] + \lambda^{-1} (\Box+q)a_{-1} \right].
\end{equation}
We would like the last expression to be $O(\lambda^{-1})$. This will hold if $a_0$ and $a_{-1}$ satisfy the \emph{transport equations} 
\begin{align} \label{wave_transport_equations}
\left\{ \begin{array}{rl}
L a_0 &\!\!\!= 0, \\[3pt]
La_{-1} &\!\!\!= i(\Box+q)a_0.
\end{array} \right.
\end{align}
Let $\chi \in C^{\infty}_c(\mR^n)$ be supported near $0$, and choose 
\[
\breve{a}_0(x',z,w) := \chi(x',w).
\]
We will later choose $\chi$ to depend on $\lambda$. Next we choose 
\[
\breve{a}_{-1}(x',z,w) := -\frac{1}{2i} \int_0^z ((\Box + q)a_0){\,\breve{\rule{0pt}{6pt}}\,}(x',s,w) \,ds.
\]
These functions satisfy \eqref{wave_transport_equations}, and they vanish unless $w$ is small (i.e.\ $x_n$ is close to $t$). Then \eqref{wave_equation_geometric_optics_three} becomes 
\[
(\Box+q)(e^{i\lambda \varphi} a) = F_{\lambda}
\]
where 
\[
F_{\lambda} := \lambda^{-1} e^{i\lambda \varphi} (\Box+q)a_{-1}.
\]
Using the Cauchy-Schwartz inequality, one can check that 
\begin{align*}
\norm{F_{\lambda}}_{L^{\infty}(\Omega \times (0,T))} &\leq \lambda^{-1} \norm{(\Box + q) a_{-1}}_{L^{\infty}(\Omega \times (0,T))} \\
 &\lesssim \lambda^{-1} \norm{\chi}_{W^{4,\infty}(\mR^n)}
\end{align*}
uniformly over $\lambda \geq 1$. This concludes the construction of the approximate solution $v = e^{i\lambda \varphi} a$.

We next find an exact solution $u = u_{\lambda}$ of \eqref{wavedp} having the form 
\[
u = v + r
\]
where $r$ is a correction term. Note that for $t$ close to $0$, $v(\,\cdot\,,t)$ is supported near $x_0 \notin \ol{\Omega}$ and hence $v = \p_t v = 0$ on $\{t=0\}$. Note also that $(\Box + q)v = F_{\lambda}$. Thus $u$ will solve \eqref{wavedp} for $f = v|_{\p \Omega \times (0,T)}$ if $r$ solves 
\begin{equation} \label{wavedp_correction_term}
\left\{ \begin{array}{rll}
(\Box + q) r &\!\!\!= -F_{\lambda} & \quad \text{in } \Omega \times (0,T), \\
r &\!\!\!= 0 & \quad \text{on } \partial \Omega \times (0,T), \\
r = \p_t r &\!\!\!= 0 & \quad \text{on } \{ t = 0 \}.
\end{array} \right.
\end{equation}
By the wellposedness of this problem \cite[Theorem 5 in \S 7.2.3]{Evans}, there is a unique solution $r$ with 
\[
\norm{r}_{L^{\infty}((0,T) ; H^1(\Omega))} \lesssim \norm{F_{\lambda}}_{L^2((0,T) ; L^2(\Omega))} \lesssim \lambda^{-1} \norm{\chi}_{W^{4,\infty}}.
\]

We now fix the choice of $\chi$ so that \eqref{concentrating_solutions_first_limit} will hold. Let $\zeta \in C^{\infty}_c(\mR^n)$ satisfy $\zeta = 1$ near $0$ and $\norm{\zeta}_{L^2(\mR^n)} = 1$, and choose 
\[
\chi(y) := \eps^{-n/2} \zeta(y/\eps)
\]
where 
\[
\eps = \eps(\lambda) = \lambda^{-\frac{1}{n+8}}.
\]
With this choice 
\[
\norm{\chi}_{L^2(\mR^n)} = 1, \qquad \norm{\chi}_{W^{4,\infty}(\mR^n)} \lesssim \eps^{-n/2-4} \lesssim \lambda^{1/2}.
\]
It follows that 
\[
\norm{v}_{L^2(\Omega \times (0,T))} \lesssim 1, \qquad \norm{r}_{L^2(\Omega \times (0,T))} \lesssim \lambda^{-1/2}.
\]
Since $u = v + r$, the integral in \eqref{concentrating_solutions_first_limit} has the form 
\begin{align*}
\int_{\Omega} \int_0^T \psi \abs{u}^2 \,dx \,dt &= \int_{\Omega} \int_0^T \psi \abs{v}^2 \,dx \,dt + O(\lambda^{-1/2}) \\
 &= \int_{\Omega} \int_0^T \psi \abs{a_0}^2 \,dx \,dt + O(\lambda^{-1/2}).
\end{align*}
Using that $\psi \abs{a_0}^2$ is compactly supported in $\Omega \times (0,T)$, we have 
\begin{align*}
 &\int_{\Omega} \int_0^T \psi \abs{u}^2 \,dx \,dt = \int_{\mR^{n+1}} \psi(x,t) \eps^{-n} \zeta(\frac{x'}{\eps}, \frac{t-x_n}{2\eps})^2 \,dx \,dt + O(\lambda^{-1/2}) \\
 &= \int_{\mR^{n+1}} \psi(x',z-w,z+w) \eps^{-n} \zeta(x'/\eps, w/\eps)^2 \,dx' \,dz \,dw + O(\lambda^{-1/2})
\end{align*}
by changing variables as in \eqref{new_z_w_coordinates}. Finally, changing $x'$ to $\eps x'$ and $w$ to $\eps w$ and letting $\lambda \to \infty$ (so $\eps \to 0$) yields 
\begin{align*}
\lim_{\lambda \to \infty} \int_{\Omega} \int_0^T \psi \abs{u}^2 \,dx \,dt &= \int_{\mR^{n+1}} \psi(0',z,z) \zeta(x',w)^2 \,dx' \,dz \,dw \\
 &= \int_{-\infty}^{\infty} \psi(0',z,z) \,dz =  \int_{\delta}^L \psi(x_0 + s e_n, s) \,ds
\end{align*}
by the normalization $\norm{\zeta}_{L^2(\mR^n)} = 1$ and the fact that $\psi \in C^{\infty}_c(\Omega \times [0,T])$. This proves \eqref{concentrating_solutions_first_limit}.

It remains to prove \eqref{concentrating_solutions_second_limit}. Since $\eta(T) \notin \ol{\Omega}$, we have $v = \p_t v = 0$ on $\{ t=T \}$, and we may alternatively arrange that $r$ solves \eqref{wavedp_correction_term} with $r = \p_t r = 0$ on $\{ t=T \}$ instead of $\{ t=0 \}$. We can do such a construction for the potential $\tilde{q}$ instead of $q$. Since $\varphi$ and $a_0$ are independent of $q$, the same argument as above proves \eqref{concentrating_solutions_second_limit}.
\end{proof}

\section{Calder\'on problem: boundary determination} \label{sec_calderon_boundary}

Electrical Impedance Tomography (EIT) is an imaging method with potential applications in medical imaging and nondestructive testing. The method is based on the following important inverse problem.

\begin{quote}
{\bf Calder\'on problem:} Is it possible to determine the electrical conductivity of a medium by making voltage and current measurements on its boundary?
\end{quote}

The treatment in this section follows \cite{FSU}.

Let us begin by recalling the mathematical model of EIT. The purpose is to determine the electrical conductivity $\gamma(x)$ at each point $x \in \Omega$, where $\Omega \subset \mR^n$ represents the body which is imaged (in practice $n=3$). We assume that $\Omega \subset \mR^n$ is a bounded open set with $C^{\infty}$ boundary, and that $\gamma \in C^{\infty}(\closure{\Omega})$ is positive.

Under the assumption of no sources or sinks of current in $\Omega$, a voltage potential $f$ at the boundary $\partial \Omega$ induces a voltage potential $u$ in $\Omega$, which solves the Dirichlet problem for the conductivity equation, 
\begin{equation} \label{conductivitydp}
\left\{ \begin{array}{rll}
\nabla \cdot \gamma \nabla u &\!\!\!= 0 & \quad \text{in } \Omega, \\
u &\!\!\!= f & \quad \text{on } \partial \Omega.
\end{array} \right.
\end{equation}
Since $\gamma \in C^{\infty}(\closure{\Omega})$ is positive, the equation is uniformly elliptic, and there is a unique solution $u \in C^{\infty}(\ol{\Omega})$ for any boundary value $f \in C^{\infty}(\partial \Omega)$. One can define the Dirichlet-to-Neumann map (DN map) as 
\begin{equation*}
\Lambda_{\gamma}: C^{\infty}(\partial \Omega) \to C^{\infty}(\partial \Omega),  \ \ f \mapsto \gamma \p_{\nu} u |_{\partial \Omega}.
\end{equation*}
Here $\nu$ is the outer unit normal to $\p \Omega$ and $\p_{\nu} u|_{\p \Omega} = \nabla u \cdot \nu|_{\p \Omega}$ is the normal derivative of $u$. Physically, $\Lambda_{\gamma} f$ is the current flowing through the boundary.

The Calder\'on problem (also called the inverse conductivity problem) is to determine the conductivity function $\gamma$ from the knowledge of the map $\Lambda_{\gamma}$. That is, if the measured current $\Lambda_{\gamma} f$ is known for all boundary voltages $f \in C^{\infty}(\p \Omega)$, one would like to determine the conductivity $\gamma$.

We will prove the following theorem.

\begin{theorem}[Boundary determination] \label{thm_calderon_boundary_determination}
Let $\gamma_1, \gamma_2 \in C^{\infty}(\ol{\Omega})$ be positive. If 
\[
\Lambda_{\gamma_1} = \Lambda_{\gamma_2},
\]
then the Taylor series of $\gamma_1$ and $\gamma_2$ coincide at any point of $\p \Omega$.
\end{theorem}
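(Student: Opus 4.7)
The plan is to follow the template of the Gel'fand argument in Section \ref{sec_gelfand}: convert the hypothesis into an interior orthogonality relation, and then test it against a family of quasimodes that concentrate at an arbitrary boundary point $x_0 \in \p\Omega$. For solutions $u_j \in C^\infty(\ol{\Omega})$ of $\nabla \cdot \gamma_j \nabla u_j = 0$ with $u_j|_{\p\Omega} = f_j$, two applications of the divergence theorem together with $\nabla \cdot \gamma_j \nabla u_j = 0$ and the symmetry of $\Lambda_{\gamma_2}$ give, under the hypothesis $\Lambda_{\gamma_1} = \Lambda_{\gamma_2}$,
\[
\int_\Omega (\gamma_1 - \gamma_2)\,\nabla u_1 \cdot \nabla \bar{u}_2 \, dx \;=\; ((\Lambda_{\gamma_1} - \Lambda_{\gamma_2}) f_1, f_2)_{L^2(\p\Omega)} \;=\; 0
\]
for every such pair $(u_1, u_2)$. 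This is the analogue of \eqref{gelfand_orthogonality_relation}.

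Fix $x_0 \in \p\Omega$ and choose local coordinates in which $x_0 = 0$ and $\p\Omega$ is locally $\{x_n = 0\}$, with $\Omega$ locally $\{x_n > 0\}$. For a unit tangent covector $\xi_0'$ and $\lambda \geq 1$, I would construct WKB solutions
\[
u_{j,\lambda}(x) \;=\; e^{i\lambda (x' \cdot \xi_0' + i x_n)}\, a_j(x;\lambda) + r_{j,\lambda}(x)
\]
of $\nabla \cdot \gamma_j \nabla u_{j,\lambda} = 0$. The complex phase $\psi(x) = x' \cdot \xi_0' + i x_n$ satisfies the eikonal equation $\abs{\nabla \psi}^2 = \abs{\xi_0'}^2 - 1 = 0$ and produces the decaying factor $e^{-\lambda x_n}$ that confines $u_{j,\lambda}$ to a boundary layer of thickness $\lambda^{-1}$. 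Expanding $a_j \sim a_{j,0} + \lambda^{-1} a_{j,-1} + \cdots$ and equating like powers of $\lambda$, as in the proof of Proposition \ref{prop_gelfand_concentrating_solutions}, yields a sequence of transport equations of $\dbar$-type along the characteristic of $\psi$, whose source terms successively involve $\gamma_j, \p_n \gamma_j, \p_n^2 \gamma_j, \ldots$ at the boundary. Solving them with a tangential cutoff $\chi(x'/\eps)$ and invoking well-posedness of the Dirichlet problem for $\nabla \cdot \gamma_j \nabla$ to absorb the WKB residual gives $\norm{r_{j,\lambda}}_{H^1(\Omega)} = O(\lambda^{-1/2})$ for a suitable choice $\eps = \eps(\lambda) \to 0$.

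Substituting $u_{1,\lambda}$ and $u_{2,\lambda}$ into the orthogonality relation and rescaling the normal variable by $y_n = \lambda x_n$ and the tangential variable by $y' = x'/\eps$, the integral develops an asymptotic expansion
\[
0 \;=\; \int_\Omega (\gamma_1 - \gamma_2)\, \nabla u_{1,\lambda} \cdot \nabla \bar{u}_{2,\lambda}\, dx \;\sim\; \sum_{k=0}^\infty \lambda^{-k}\, c_k(x_0, \xi_0'),
\]
in which each coefficient $c_k$ is a Gaussian moment integral whose leading contribution is a nonzero multiple of $\p_n^k(\gamma_1 - \gamma_2)(x_0)$ plus a linear combination of $\p_n^\ell(\gamma_1 - \gamma_2)(x_0)$ for $\ell < k$. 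The relation $c_0 = 0$ forces $\gamma_1(x_0) = \gamma_2(x_0)$, and an induction on $k$ forces $\p_n^k(\gamma_1 - \gamma_2)(x_0) = 0$ for every $k \geq 1$. Since $x_0 \in \p\Omega$ is arbitrary, the equality of $\gamma_1$ and $\gamma_2$ along $\p\Omega$ automatically gives equality of all tangential derivatives at $x_0$, and combined with the vanishing of all normal derivatives this shows that the full Taylor series of $\gamma_1$ and $\gamma_2$ coincide at every boundary point.

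The main obstacle is the bookkeeping in the last step: one must trace carefully how the successive derivatives $\p_n^k \gamma_j(x_0)$ enter the WKB amplitudes $a_{j,-k}$ through the transport equations, and then verify that the Gaussian moment integral producing the coefficient of $\p_n^k(\gamma_1 - \gamma_2)(x_0)$ in $c_k$ is genuinely nonzero (so that the induction does not stall). Conceptually this is the statement that $\Lambda_\gamma$ is a classical elliptic $\Psi$DO of order $1$ on $\p\Omega$ whose polyhomogeneous symbol encodes the full normal jet of $\gamma|_{\p\Omega}$; the quasimode calculation above is simply the elementary form of this symbol extraction, carried out via a high-frequency probe in the spirit of \eqref{principal_symbol_formula}.
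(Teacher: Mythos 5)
Your overall strategy is the paper's: the Alessandrini identity reduces the hypothesis to $\int_\Omega (\gamma_1-\gamma_2)\nabla u_1\cdot\nabla\bar u_2\,dx=0$, and one tests this against WKB quasimodes with complex phase $x'\cdot\xi_0'+ix_n$ concentrating in a boundary layer at $x_0$. But there are two points where your execution has a genuine gap, and you have in effect flagged the second one yourself without closing it.

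First, the remainder bound $\norm{r_{j,\lambda}}_{H^1(\Omega)}=O(\lambda^{-1/2})$, imported from the Gel'fand argument, is too weak here. To extract $\p_n^k(\gamma_1-\gamma_2)$ you must multiply the orthogonality relation by $\lambda^k$, and the cross terms involving $r_{j,\lambda}$ then contribute $O(\lambda^{k-1/2})$, which blows up for every $k\geq 1$; your induction stalls after the step $\gamma_1(x_0)=\gamma_2(x_0)$. This is exactly why Proposition \ref{prop_calderon_oscillating_solutions} builds the amplitude to order $N$ (with the transport equations solved to infinite order at $\{x_n=0\}$, via Borel summation) so that the residual, and hence $\norm{r_j}_{H^1}$, is $O(\lambda^{-N})$ with $N>k$ chosen in advance. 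Second, your plan to read $\p_n^k(\gamma_1-\gamma_2)(x_0)$ off the $k$-th coefficient of a full asymptotic expansion requires tracing how the normal jet of $\gamma_j$ enters the subleading amplitudes $a_{j,-k}$ and then verifying a nondegeneracy of the resulting Gaussian moment --- you name this as ``the main obstacle'' and leave it open, so as written the argument is incomplete precisely at the step that carries the theorem. The paper avoids this bookkeeping entirely: assuming inductively that $\p_n^j(\gamma_1-\gamma_2)|_{\p\Omega}=0$ for $j\leq k-1$, one factors $\gamma_1-\gamma_2=x_n^k f$ with $f(x',0)=\p_n^k(\gamma_1-\gamma_2)(x',0)/k!$, and then only the \emph{leading} amplitude enters through the limit \eqref{boundary_determination_limit} with weight $\mathrm{dist}(x,\p\Omega)^k$; the nonvanishing constant is the elementary integral $c_k=2\int_0^\infty t^k e^{-2t}\,dt\neq 0$. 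Incorporating these two devices --- high-order quasimodes and the $x_n^k$-factorization of $\gamma_1-\gamma_2$ --- turns your sketch into the paper's proof.
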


This result was proved in \cite{KohnVogelius}, and it in particular implies that any real-analytic conductivity is uniquely determined by the DN map. The argument extends to piecewise real-analytic conductivities. A different proof was given in \cite{SylvesterUhlmann_boundary}, based on two facts:

\begin{enumerate}
\item[1.]
The DN map $\Lambda_{\gamma}$ is an elliptic $\Psi$DO of order $1$ on $\p \Omega$.
\item[2.] 
The Taylor series of $\gamma$ at a boundary point can be read off from the symbol of $\Lambda_{\gamma}$ computed in suitable coordinates. The symbol of $\Lambda_{\gamma}$ can be computed by testing against highly oscillatory boundary data (compare with \eqref{principal_symbol_formula}).
\end{enumerate}

\begin{remark}
The above argument is based on studying the singularities of the integral kernel of the DN map, and it only determines the Taylor series of the conductivity at the boundary. The values of the conductivity in the interior are encoded in the $C^{\infty}$ part of the kernel, and different methods (based on \emph{complex geometrical optics solutions}) are required for interior determination.
\end{remark}

Let us start with a simple example:

\begin{example}[DN map in half space is a $\Psi$DO]
Let $\Omega = \mR^n_+ = \{Êx_n > 0 \}$, so $\p \Omega = \mR^{n-1} = \{Êx_n = 0 \}$. We wish to compute the DN map for the Laplace equation (i.e.\ $\gamma \equiv 1$) in $\Omega$. Consider 
\[
\left\{ \begin{array}{rll}
\Delta u &\!\!\!= 0 & \quad \text{in } \mR^n_+, \\
u &\!\!\!= f & \quad \text{on } \{Êx_n = 0 \}.
\end{array} \right.
\]
Writing $x = (x',x_n)$ and taking Fourier transforms in $x'$ gives 
\[
\left\{ \begin{array}{rll}
(\p_n^2 - \abs{\xi'}^2) \hat{u}(\xi',x_n) &\!\!\!= 0 & \quad \text{in } \mR^n_+, \\
\hat{u}(\xi',0) &\!\!\!= \hat{f}(\xi'). &
\end{array} \right.
\]
Solving this ODE for fixed $\xi'$ and choosing the solution that decays for $x_n > 0$ gives 
\begin{align*}
 &\hat{u}(\xi',x_n) = e^{-x_n \abs{\xi'}} \hat{f}(\xi') \\
 &\implies u(x',x_n) = \mF_{\xi'}^{-1} \left\{ e^{-x_n \abs{\xi'}} \hat{f}(\xi') \right\}.
\end{align*}
We may now compute the DN map:
\[
\Lambda_1 f = -\p_n u|_{x_n = 0} =  \mF_{\xi'}^{-1} \left\{ \abs{\xi'} \hat{f}(\xi') \right\}.
\]
Thus the DN map on the boundary $\p \Omega = \mR^{n-1}$ is just $\Lambda_1 = \abs{D_{x'}}$ corresponding to the Fourier multiplier $\abs{\xi'}$. This shows that at least in this simple case, the DN map is an elliptic $\Psi$DO of order $1$.
\end{example}

We will now prove Theorem \ref{thm_calderon_boundary_determination} by an argument that avoids showing that the DN map is a $\Psi$DO, but is rather based on directly testing the DN map against oscillatory boundary data. The first step is a basic integral identity (sometimes called Alessandrini identity) for the DN map.

\begin{lemma}[Integral identity] \label{lemma_calderon_integral_identity}
Let $\gamma_1, \gamma_2 \in C^{\infty}(\ol{\Omega})$. If $f_1, f_2 \in C^{\infty}(\p \Omega)$, then 
\[
((\Lambda_{\gamma_1} - \Lambda_{\gamma_2}) f_1, f_2)_{L^2(\p \Omega)} = \int_{\Omega} (\gamma_1 - \gamma_2) \nabla u_1 \cdot \nabla \bar{u}_2 \,dx
\]
where $u_j \in C^{\infty}(\ol{\Omega})$ solves $\mdiv(\gamma_j \nabla u_j) = 0$ in $\Omega$ with $u_j|_{\p \Omega} = f_j$.
\end{lemma}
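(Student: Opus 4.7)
The plan is to prove the identity by two applications of the divergence theorem, one for each term in $\Lambda_{\gamma_1} - \Lambda_{\gamma_2}$, taking advantage of the fact that $u_j$ solves the conductivity equation with $\gamma_j$.

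The key building block is the following "Green's identity" for the conductivity operator: if $\gamma \in C^\infty(\ol{\Omega})$ is positive, $w \in C^\infty(\ol{\Omega})$ solves $\mdiv(\gamma \nabla w) = 0$ in $\Omega$, and $v \in C^\infty(\ol{\Omega})$ is arbitrary, then by integration by parts applied to the vector field $\gamma \bar{v} \nabla w$,
\[
\int_{\p\Omega} \gamma (\p_\nu w)\bar{v}\,dS \;=\; \int_\Omega \mdiv(\gamma \bar{v}\nabla w)\,dx \;=\; \int_\Omega \gamma \nabla w \cdot \nabla \bar{v}\,dx,
\]
since the $\bar{v}\,\mdiv(\gamma \nabla w)$ term vanishes.

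I would apply this identity twice. First, with $\gamma = \gamma_1$, $w = u_1$ and $v = u_2$: since $u_2|_{\p\Omega} = f_2$ and $\gamma_1\p_\nu u_1|_{\p\Omega} = \Lambda_{\gamma_1}f_1$, this yields
\[
(\Lambda_{\gamma_1}f_1,f_2)_{L^2(\p\Omega)} \;=\; \int_\Omega \gamma_1 \nabla u_1 \cdot \nabla \bar{u}_2\,dx.
\]
For the $\gamma_2$ term, I want an expression involving $\gamma_2 \nabla u_1 \cdot \nabla \bar{u}_2$, not a third auxiliary solution. The trick is to integrate by parts in the other direction: apply the same identity with $\gamma = \gamma_2$ but now with $w = u_2$ (which solves $\mdiv(\gamma_2\nabla u_2)=0$) and $v = u_1$, and then take complex conjugates. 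Concretely,
\[
\int_{\p\Omega} f_1\, \overline{\gamma_2 \p_\nu u_2}\,dS \;=\; \overline{\int_{\p\Omega} \gamma_2(\p_\nu u_2)\bar{u}_1\,dS} \;=\; \overline{\int_\Omega \gamma_2\nabla u_2 \cdot \nabla \bar{u}_1\,dx} \;=\; \int_\Omega \gamma_2 \nabla u_1 \cdot \nabla \bar{u}_2\,dx,
\]
and the left-hand side equals $(f_1,\Lambda_{\gamma_2}f_2)_{L^2(\p\Omega)} = (\Lambda_{\gamma_2}f_1,f_2)_{L^2(\p\Omega)}$ by the symmetry of $\Lambda_{\gamma_2}$ (which itself is an immediate consequence of the previous display applied with $\gamma_2$, $u_2$, $u_1$ versus $\gamma_2$, $v_1$, $u_2$ where $v_1$ solves the $\gamma_2$-problem with data $f_1$).

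Subtracting the two resulting expressions gives
\[
((\Lambda_{\gamma_1} - \Lambda_{\gamma_2})f_1,f_2)_{L^2(\p\Omega)} \;=\; \int_\Omega (\gamma_1 - \gamma_2)\nabla u_1 \cdot \nabla \bar{u}_2\,dx,
\]
which is the desired identity. There is no real obstacle here; the only thing to watch carefully is the placement of complex conjugates, since the $L^2$ inner product is sesquilinear but the bilinear form $(u,v) \mapsto \int \gamma \nabla u \cdot \nabla \bar{v}$ is, likewise, conjugate-linear in the second slot, so everything matches cleanly.
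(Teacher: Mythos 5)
Your proof is correct and follows essentially the same route as the paper: integrate by parts using the equation for $u_1$ to handle the $\gamma_1$ term, then use the symmetry of $\Lambda_{\gamma_2}$ together with an integration by parts using the equation for $u_2$ to handle the $\gamma_2$ term, and subtract. The only implicit point worth noting is that the conjugation step uses that $\gamma_2$ is real-valued, which holds since conductivities are positive.
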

\begin{proof}
We first observe that the DN map is symmetric: if $\gamma \in C^{\infty}(\ol{\Omega})$ is positive and if $u_f$ solves $\nabla \cdot (\gamma \nabla u_f) = 0$ in $\Omega$ with $u_f|_{\p \Omega} = f$, then an integration by parts shows that 
\begin{align*}
(\Lambda_{\gamma} f, g)_{L^2(\p \Omega)} &= \int_{\p \Omega} (\gamma \p_{\nu} u_f) \ol{u}_g \,dS = \int_{\Omega} \gamma \nabla u_f \cdot \nabla \ol{u}_g \,dx \\
 &= \int_{\p \Omega} u_f (\ol{\gamma \p_{\nu} u_g}) \,dS = (f, \Lambda_{\gamma} g)_{L^2(\p \Omega)}.
\end{align*}
Thus 
\begin{align*}
(\Lambda_{\gamma_1} f_1, f_2)_{L^2(\p \Omega)} &= \int_{\Omega} \gamma_1 \nabla u_1 \cdot \nabla \ol{u}_2 \,dx, \\
(\Lambda_{\gamma_2} f_1, f_2)_{L^2(\p \Omega)} &= (f_1, \Lambda_{\gamma_2} f_2)_{L^2(\p \Omega)} = \int_{\Omega} \gamma_2 \nabla u_1 \cdot \nabla \ol{u}_2 \,dx.
\end{align*}
The result follows by subtracting the above two identities.
\end{proof}

Next we show that if $x_0$ is a boundary point, there is an approximate solution of the conductivity equation that concentrates near $x_0$, has highly oscillatory boundary data, and decays exponentially in the interior. As a simple example, the solution of 
\[
\left\{ \begin{array}{rll}
\Delta u &\!\!\!= 0 & \quad \text{in } \mR^n_+, \\
u(x',0) &\!\!\!= e^{i\lambda x' \cdot \xi'} &
\end{array} \right.
\]
that decays for $x_n > 0$ is given by $u = e^{-\lambda x_n} e^{i\lambda x' \cdot \xi'}$, which concentrates near $\{Êx_n = 0 \}$ and decays exponentially when $x_n > 0$ if $\lambda$ is large. Roughly, this means that the solution of a Laplace type equation with highly oscillatory boundary data concentrates near the boundary.

\begin{proposition} \label{prop_calderon_oscillating_solutions}
(Concentrating approximate solutions) Let $\gamma \in C^{\infty}(\ol{\Omega})$ be positive, let $x_0 \in \p \Omega$, let $\xi_0$ be a unit tangent vector to $\p \Omega$ at $x_0$, and let $\chi \in C^{\infty}_c(\p \Omega)$ be supported near $x_0$. Let also $N \geq 1$. For any $\lambda \geq 1$ there exists $v = v_{\lambda} \in C^{\infty}(\ol{\Omega})$ having the form 
\[
v = \lambda^{-1/2} e^{i\lambda \Phi} a
\]
such that 
\begin{gather*}
\nabla \Phi(x_0) = \xi_0 -i \nu(x_0), \\
\text{$a$ is supported near $x_0$ with $a|_{\p \Omega} = \chi$},
\end{gather*}
and as $\lambda \to \infty$ 
\[
\norm{v}_{H^1(\Omega)} \sim 1, \qquad \norm{\mdiv(\gamma \nabla v)}_{L^2(\Omega)} = O(\lambda^{-N}).
\]
Moreover, if $\tilde{\gamma} \in C^{\infty}(\ol{\Omega})$ is positive and $\tilde{v} = \tilde{v}_{\lambda}$ is the corresponding approximate solution constructed for $\tilde{\gamma}$, then for any $f \in C(\ol{\Omega})$ and $k \geq 0$ one has 
\begin{equation} \label{boundary_determination_limit}
\lim_{\lambda \to \infty} \lambda^k \int_{\Omega} \mathrm{dist}(x,\p \Omega)^k f \nabla v \cdot \ol{\nabla \tilde{v}} \,dx = c_k \int_{\p \Omega} f \abs{\chi}^2 \,dS.
\end{equation}
for some $c_k \neq 0$.
\end{proposition}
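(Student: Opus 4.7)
The plan is to build $v$ via a complex-phase WKB (geometric optics) ansatz in boundary normal coordinates near $x_0$. This is the elliptic analogue of Proposition~\ref{prop_gelfand_concentrating_solutions}: because the conductivity equation is elliptic, the phase $\Phi$ must be complex, so that $e^{i\lambda\Phi}$ decays exponentially into the interior and $v$ concentrates in a boundary layer of width $\lambda^{-1}$. Moreover, $\Phi$ will be chosen independent of $\gamma$ so the same phase serves both $v$ and $\tilde v$, which is what makes the limit \eqref{boundary_determination_limit} a clean boundary integral.

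I first introduce boundary normal coordinates $(x',x_n)$ near $x_0$ with $\Omega=\{x_n>0\}$ locally and $\nu=-\p_n$ on $\p\Omega$, writing the Euclidean metric as $g=dx_n^2+g_{\alpha\beta}(x',x_n)dx^{\alpha}dx^{\beta}$, and rotate the tangential coordinates so that $\xi_0=e_1$ at $x_0$. The ansatz is
\[
v=\lambda^{-1/2}e^{i\lambda\Phi}\bigl(a_0+\lambda^{-1}a_{-1}+\cdots+\lambda^{-N}a_{-N}\bigr),
\]
and the standard calculation yields
\[
e^{-i\lambda\Phi}\nabla\!\cdot\!(\gamma\nabla(e^{i\lambda\Phi}a))=-\lambda^2\gamma|\nabla\Phi|_g^2\,a+i\lambda\,La+\nabla\!\cdot\!(\gamma\nabla a),
\]
with $L=2\gamma\nabla\Phi\!\cdot\!\nabla+\nabla\!\cdot\!(\gamma\nabla\Phi)$. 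The $\lambda^2$ term is killed by the complex eikonal equation $|\nabla\Phi|_g^2=0$ with initial data $\Phi(x',0)=x_1$ and $\p_n\Phi(x',0)=i$, so that $\nabla\Phi(x_0)=\xi_0-i\nu(x_0)$. Writing $\Phi=x_1+ix_n+\sum_{j\ge 2}\Phi_j(x')x_n^j$ and matching powers of $x_n$, each $\Phi_j$ is determined by a linear relation whose leading coefficient is $2\p_n\Phi(x',0)=2i\neq 0$, so the recursion runs to any order and produces $\Phi$ (independent of $\gamma$) with $|\nabla\Phi|_g^2=O(x_n^{N+1})$. The transport equations $La_0=0$, $La_{-k}=i\,\nabla\!\cdot\!(\gamma\nabla a_{-k+1})$ are transverse ODEs along the characteristic field $\gamma\nabla\Phi$ (which has nonzero normal component $i\gamma(x_0)$ at $\p\Omega$) and are solved by integration from the boundary, with $a_0|_{\p\Omega}=\chi$ and $a_{-k}|_{\p\Omega}=0$. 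A smooth cutoff to a neighborhood of $x_0$ is applied; the induced cutoff error is $O(\lambda^{-\infty})$ by either exponential decay in $\lambda\,\mathrm{Im}\,\Phi$ (away from $\p\Omega$) or nonstationary phase in $x'$ (on $\p\Omega$ away from $x_0$).

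For the estimates I use $\mathrm{Im}\,\Phi=x_n+O(x_n^2)\ge cx_n$, giving $|e^{i\lambda\Phi}|\lesssim e^{-c\lambda x_n}$ on the support of $a$, together with the Laplace-type identity $\int_0^{\infty}x_n^m e^{-2c\lambda x_n}dx_n\sim\lambda^{-m-1}$. Since the dominant contribution to $\nabla v$ is $i\lambda^{1/2}e^{i\lambda\Phi}a_0\nabla\Phi$, these give $\|v\|_{H^1(\Omega)}\sim 1$; the residue $\nabla\!\cdot\!(\gamma\nabla v)$ combines the eikonal remainder of size $\lambda^{3/2}x_n^{N+1}e^{-c\lambda x_n}$ with the lowest transport error of size $\lambda^{-N-1/2}$, both $O(\lambda^{-N})$ in $L^2$ once $N$ is large. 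Because $\Phi=\tilde\Phi$, the leading contribution is
\[
\nabla v\!\cdot\!\overline{\nabla\tilde v}=\lambda\,e^{-2\lambda\,\mathrm{Im}\,\Phi}\,a_0\overline{\tilde a_0}\bigl(\nabla\Phi\!\cdot\!\overline{\nabla\Phi}\bigr)+\text{lower order},
\]
with $\nabla\Phi\!\cdot\!\overline{\nabla\Phi}|_{x_0}=|\xi_0|^2+|\nu(x_0)|^2=2$. Multiplying by $x_n^k f$, changing variables $s=\lambda x_n$, and letting $\lambda\to\infty$ (so that the profile collapses to $\p\Omega$ and $\sqrt{\det g}\,dx'dx_n$ becomes $dS\,dx_n$) yields
\[
c_k=2\int_0^{\infty}s^k e^{-2s}\,ds=\frac{k!}{2^k}\neq 0.
\]

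The main obstacle is the careful bookkeeping of the eikonal residue: the $\lambda^2|\nabla\Phi|_g^2 a_0$ term looks dangerous, but is tamed because the eikonal is solved to order $x_n^{N+1}$ and every factor of $x_n$ buys $\lambda^{-1}$ under the Gaussian-type weight $e^{-c\lambda x_n}$; a parallel accounting has to be carried out at every level of the transport hierarchy and for the cutoff error. Once this is done uniformly in $x'$ over $\mathrm{supp}\,\chi$, the norm bounds and the limit \eqref{boundary_determination_limit} reduce to the elementary computations above.
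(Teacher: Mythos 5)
Your proposal follows essentially the same route as the paper: a complex-phase WKB ansatz in boundary normal coordinates, a phase $\Phi$ solving the complex eikonal equation with $\Phi|_{x_n=0}=x'\cdot\xi_0'$ and $\p_n\Phi|_{x_n=0}=i$ (hence independent of $\gamma$, which is what makes \eqref{boundary_determination_limit} clean), a hierarchy of transport equations for the amplitudes, and Laplace-type integrals $\int_0^\infty x_n^m e^{-2\lambda x_n}\,dx_n$ for both the norm estimates and the limit, giving $c_k=2\int_0^\infty s^k e^{-2s}\,ds=k!/2^k$. The one step that does not go through as literally written is the claim that the transport equations are ``transverse ODEs along the characteristic field $\gamma\nabla\Phi$ \dots solved by integration from the boundary.'' For $x_n>0$ the field $\nabla\Phi$ is genuinely complex, so $L=2\gamma\nabla\Phi\cdot\nabla+\nabla\cdot(\gamma\nabla\Phi)$ has no real integral curves and $Lb_0=0$ cannot be solved by integrating an ODE. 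The correct mechanism is the one you half-identify: because the normal coefficient $2\gamma\,\p_n\Phi|_{x_n=0}=2i\gamma$ is nonvanishing, the equation determines all normal Taylor coefficients $\p_n^j b_{-m}(x',0)$ recursively from $\chi$; the paper then invokes Borel summation (Lemma \ref{lemma_borel_summation}) to produce smooth, compactly supported amplitudes solving each transport equation only \emph{to infinite order} at $\{x_n=0\}$. The resulting residuals vanish to infinite order in $x_n$ and are absorbed by the weight $e^{-2\lambda x_n}$, exactly as in your own accounting of the eikonal remainder — so this repair costs nothing in the estimates. It also makes your cutoff/nonstationary-phase discussion unnecessary: since every Taylor coefficient is built from $\chi$ and its derivatives, the amplitudes can be taken supported in a small box near $x_0$ from the outset.

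A minor presentational difference: the paper first flattens the boundary, where the eikonal equation is solved \emph{exactly} by the linear phase $\Phi=x'\cdot\xi_0'+ix_n$, and only in the general curved case resorts to solving it to infinite (rather than your finite) order at $\{x_n=0\}$; either variant feeds into the same remainder bookkeeping. With the transport step repaired as above, your argument is correct and complete.
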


We can now give the proof of the boundary determination result.

\begin{proof}[Proof of Theorem \ref{thm_calderon_boundary_determination}]
Using the assumption that $\Lambda_{\gamma_1} = \Lambda_{\gamma_2}$ together with the integral identity in Lemma \ref{lemma_calderon_integral_identity}, we have that 
\begin{equation} \label{calderon_integral_identity_vanishing}
\int_{\Omega} (\gamma_1 - \gamma_2) \nabla u_1 \cdot \nabla \bar{u}_2 \,dx = 0
\end{equation}
whenever $u_j$ solves $\mdiv(\gamma_j \nabla u_j) = 0$ in $\Omega$.

Let $x_0 \in \p \Omega$, let $\xi_0$ be a unit tangent vector to $\p \Omega$ at $x_0$, and let $\chi \in C^{\infty}_c(\p \Omega)$ satisfy $\chi = 1$ near $x_0$. We use Proposition \ref{prop_calderon_oscillating_solutions} to construct functions 
\[
v_j = v_{j,\lambda} = \lambda^{-1/2} e^{i\lambda \Phi} a_j
\]
so that 
\begin{equation} \label{vj_estimates}
\norm{v_j}_{H^1(\Omega)} \sim 1, \qquad \norm{\mdiv(\gamma_j \nabla v_j)}_{L^2(\Omega)} = O(\lambda^{-N}).
\end{equation}
We obtain exact solutions $u_j$ of $\mdiv(\gamma_j \nabla u_j) = 0$ by setting 
\[
u_j := v_j + r_j,
\]
where the correction terms $r_j$ are the unique solutions of 
\[
\mdiv(\gamma_j \nabla r_j) = -\mdiv(\gamma_j \nabla v_j) \text{ in $\Omega$}, \qquad r_j|_{\p \Omega} = 0.
\]
By standard energy estimates \cite[Section 6.2]{Evans} and by \eqref{vj_estimates}, the solutions $r_j$ satisfy 
\begin{equation} \label{calderon_correction_term_estimate}
\norm{r_j}_{H^1(\Omega)} \lesssim \norm{\mdiv(\gamma_j \nabla v_j)}_{H^{-1}(\Omega)} = O(\lambda^{-N}).
\end{equation}

We now insert the solutions $u_j = v_j + r_j$ into \eqref{calderon_integral_identity_vanishing}. Using \eqref{calderon_correction_term_estimate} and \eqref{vj_estimates}, it follows that 
\begin{equation} \label{gamma_difference_approximate}
\int_{\Omega} (\gamma_1 - \gamma_2) \nabla v_1 \cdot \nabla \bar{v}_2 \,dx = O(\lambda^{-N})
\end{equation}
as $\lambda \to \infty$. Letting $\lambda \to \infty$, the formula \eqref{boundary_determination_limit} yields 
\[
\int_{\p \Omega} (\gamma_1 - \gamma_2) \abs{\chi}^2 \,dS = 0.
\]
In particular, $\gamma_1(x_0) = \gamma_2(x_0)$.

We will prove by induction that 
\begin{equation} \label{pnuj_induction_claim}
\p_{\nu}^j \gamma_1|_{\p \Omega} = \p_{\nu}^j \gamma_2|_{\p \Omega} \text{ near $x_0$ for any $j \geq 0$.}
\end{equation}
The case $j=0$ was proved above (here we may vary $x_0$ slightly). We make the induction hypothesis that \eqref{pnuj_induction_claim} holds for $j \leq k-1$. Let $(x', x_n)$ be boundary normal coordinates so that $x_0$ corresponds to $0$, and $\p \Omega$ near $x_0$ corresponds to $\{ x_n = 0 \}$. The induction hypothesis states that 
\[
\p_n^j \gamma_1(x',0) = \p_n^j \gamma_2(x',0), \qquad j \leq k-1.
\]
Considering the Taylor expansion of $(\gamma_1-\gamma_2)(x',x_n)$ with respect to $x_n$ gives that 
\[
(\gamma_1-\gamma_2)(x',x_n) = x_n^k f(x',x_n) \text{ near $0$ in $\{ x_n \geq 0 \}$}
\]
for some smooth function $f$ with $f(x',0) = \frac{\p_n^k (\gamma_1-\gamma_2)(x',0)}{k!}$. Inserting this formula in \eqref{gamma_difference_approximate}, we obtain that 
\[
\lambda^k \int_{\Omega} x_n^k f \nabla v_1 \cdot \nabla \bar{v}_2 \,dx = O(\lambda^{k-N}).
\]
Now $x_n = \mathrm{dist}(x,\p \Omega)$ in boundary normal coordinates. Assuming that $N$ was chosen larger than $k$, we may take the limit as $\lambda \to \infty$ and use \eqref{boundary_determination_limit} to obtain that 
\[
\int_{\p \Omega} f(x',0) \abs{\chi(x',0)}^2 \,dS(x') = 0.
\]
This shows that $\p_n^k (\gamma_1-\gamma_2)(x',0) = 0$ for $x'$ near $0$, which concludes the induction.
\end{proof}

It remains to prove Proposition \ref{prop_calderon_oscillating_solutions}, which constructs approximate solutions (also called \emph{quasimodes}) concentrating near a boundary point. This is a typical geometrical optics / WKB type construction for quasimodes with complex phase. The proof is elementary, although a bit long. The argument is simplified slightly by using the Borel summation lemma, which is used frequently in microlocal analysis in various different forms.

\begin{lemma}[Borel summation, {{\cite[Theorem 1.2.6]{Hormander}}}] \label{lemma_borel_summation}
Let $f_j \in C^{\infty}_c(\mR^{n-1})$ for $j = 0, 1, 2, \ldots$. There exists $f \in C^{\infty}_c(\mR^n)$ such that 
\[
\p_n^j f(x',0) = f_j(x'), \qquad j=0,1,2,\ldots.
\]
\end{lemma}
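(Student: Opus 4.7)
The plan is to use the classical truncated-Taylor-series construction. Pick a cutoff $\chi \in C^\infty_c(\mR)$ with $\chi \equiv 1$ in a neighborhood of $0$, and attempt to define
\[
f(x', x_n) := \sum_{j=0}^\infty \frac{x_n^j}{j!} f_j(x') \chi(\lambda_j x_n),
\]
where $(\lambda_j) \subset [1,\infty)$ is a rapidly increasing sequence to be chosen. The role of the factor $\chi(\lambda_j x_n)$ is twofold: it equals $1$ in a neighborhood of $x_n = 0$, so it will not interfere with the prescribed Taylor data, but it also localizes the $j$-th term to the very thin slab $|x_n| \lesssim \lambda_j^{-1}$, which is what makes the series converge in $C^\infty$.

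First I would check the Taylor-coefficient matching. Fix $j \geq 0$ and apply $\p_n^j$ term by term at $x_n = 0$. By Leibniz, the $k$-th term contributes $\sum_{a=0}^j \binom{j}{a} \p_n^a(x_n^k)\big|_0 \cdot \lambda_k^{j-a} \chi^{(j-a)}(0) \cdot f_k(x')/k!$. The factor $\p_n^a(x_n^k)|_0$ vanishes unless $a = k$, in which case it equals $k!$; and if $a = k < j$, then $j - a \geq 1$ and $\chi^{(j-a)}(0) = 0$ because $\chi \equiv 1$ near $0$. Consequently only the diagonal term $k = j$ survives, giving $\p_n^j f(x',0) = f_j(x')$, as required.

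The main point to verify is convergence in $C^\infty$, which dictates the choice of $\lambda_j$. For a multi-index $\alpha = (\alpha', \alpha_n)$ with $|\alpha| < j$, Leibniz plus the substitution $x_n = \lambda_j^{-1} y$ on the support of $\chi(\lambda_j x_n)$ yields the pointwise bound
\[
\Bigl| \p^\alpha \Bigl[ \tfrac{x_n^j}{j!} f_j(x') \chi(\lambda_j x_n) \Bigr] \Bigr| \lesssim_{\alpha,\chi} \, \|f_j\|_{C^{|\alpha'|}} \, \lambda_j^{-(j-\alpha_n)},
\]
because each $\p_n$ acting on $\chi(\lambda_j x_n)$ produces a $\lambda_j$, while each power of $x_n$ saved by the cutoff produces a $\lambda_j^{-1}$, and $j$ of these $\lambda_j^{-1}$ factors beat any fixed number of $\lambda_j$ factors once $j > \alpha_n$. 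One now picks $\lambda_j$ inductively, depending on $\|f_j\|_{C^j}$, so that the $C^{j-1}$ norm of the $j$-th term is $\leq 2^{-j}$. With this choice, for each $k$ the tail $\sum_{j \geq k+1}$ converges absolutely in $C^k$, so term-by-term differentiation is justified and $f \in C^\infty(\mR^n)$. The step requiring care is simply bookkeeping the Leibniz expansion so as to extract the crucial $\lambda_j^{-(j-\alpha_n)}$; this is the only real computation in the argument.

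Finally, compact support: each term is already supported in $\supp(f_j) \times \{|x_n| \leq C/\lambda_j\}$, so the $x_n$-support of $f$ lies in $|x_n| \leq C/\lambda_0$, which is bounded. For the application in the proof of Proposition \ref{prop_calderon_oscillating_solutions} only local information near a fixed point $x_0 \in \p \Omega$ is needed, so we may assume all $f_j$ are supported in a common compact $K \subset \mR^{n-1}$ (shrinking $K$ if necessary), and then $\supp(f) \subset K \times [-C/\lambda_0, C/\lambda_0]$, placing $f$ in $C^\infty_c(\mR^n)$. I do not expect any genuine obstacle beyond the derivative estimate described above.
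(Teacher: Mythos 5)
Your construction is correct and is precisely the standard Borel argument given in the cited reference \cite[Theorem 1.2.6]{Hormander}; the paper itself offers no proof beyond that citation, so there is nothing different to compare against. You also rightly flag that the compact-support conclusion requires the $f_j$ to be supported in a common compact set (otherwise no compactly supported $f$ can exist), a hypothesis left implicit in the paper's statement but harmless in its application.
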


\begin{proof}[Proof of Proposition \ref{prop_calderon_oscillating_solutions}]
We will first carry out the proof in the case where $x_0 = 0$ and $\p \Omega$ is flat near $0$, i.e.\ $\Omega \cap B(0,r) = \{Êx_n > 0 \} \cap B(0,r)$ for some $r > 0$ (the general case will be considered in the end of the proof). We also assume $\xi_0 = (\xi_0',0)$ where $\abs{\xi_0'} = 1$.

We look for $v$ in the form 
\[
v = e^{i\lambda \Phi} b.
\]
Write $Pu = D \cdot (\gamma D u) = \gamma D^2 u + D\gamma \cdot Du$. The principal symbol of $P$ is 
\begin{equation} \label{ptwo_formula_simple}
p_2(x,\xi) := \gamma(x) \xi \cdot \xi.
\end{equation}
Since $e^{-i\lambda \Phi}D_j(e^{i\lambda \Phi} b) = (D_j + \lambda \p_j \Phi)b$, we compute 
\begin{align}
P(e^{i\lambda \Phi} b) &= e^{i\lambda \Phi}(D + \lambda \nabla \Phi) \cdot (\gamma (D + \lambda \nabla \Phi) b) \notag \\
 &= e^{i\lambda \Phi}\left[ \lambda^2 p_2(x,\nabla \Phi) b + \lambda \frac{1}{i} \left[ \underbrace{2 \gamma \nabla \Phi \cdot \nabla b + \nabla \cdot (\gamma \nabla \Phi) b}_{=: Lb} \right] + Pb \right] \label{p_eilambdaphi_formula}
\end{align}

We want to choose $\Phi$ and $b$ so that $P(e^{i\lambda \Phi} b) = O_{L^2(\Omega)}(\lambda^{-N})$. Looking at the $\lambda^2$ term in \eqref{p_eilambdaphi_formula}, we first choose $\Phi$ so that 
\begin{equation} \label{ptwo_phi_equation}
p_2(x,\nabla \Phi) = 0 \text{ in $\Omega$}.
\end{equation}
We additionally want that $\Phi(x',0) = x' \cdot \xi_0'$ and $\p_n \Phi(x',0) = i$ (this will imply that $\nabla \Phi(0) = \xi_0 + i e_n$). In fact, using \eqref{ptwo_formula_simple} we can just choose 
\[
\Phi(x',x_n) := x' \cdot \xi_0' + i x_n
\]
and then $p_2(x,\nabla \Phi) = \gamma(\xi_0+ie_n) \cdot (\xi_0 + ie_n) \equiv 0$ in $\Omega$.

We next look for $b$ in the form 
\[
b = \sum_{j=0}^N \lambda^{-j} b_{-j}.
\]
Since $p_2(x,\nabla \Phi) \equiv 0$, \eqref{p_eilambdaphi_formula} implies that 
\begin{align}
P(e^{i\lambda \Phi} b) &= e^{i\lambda \Phi} \Big[ \lambda [ \frac{1}{i} Lb_0 ] + [ \frac{1}{i} Lb_{-1} + Pb_0] + \lambda^{-1} [ \frac{1}{i} Lb_{-2} + Pb_{-1}] + \ldots \notag \\
 &\hspace{30pt} + \lambda^{-(N-1)} [ \frac{1}{i} Lb_{-N} + Pb_{-(N-1)}] + \lambda^{-N} Pb_{-N }\Big] \label{p_eilambdaphi_formulatwo}.
\end{align}
We will choose the functions $b_{-j}$ so that 
\begin{align} \label{lbone} 
\left\{ \begin{array}{rl}
Lb_0 &\!\!\!= 0 \text{ to infinite order at $\{Êx_n = 0 \}$}, \\[5pt]
\frac{1}{i} Lb_{-1} + P b_0 &\!\!\!= 0 \text{ to infinite order at $\{Êx_n = 0 \}$}, \\[5pt]
&\!\vdots \\[5pt]
\frac{1}{i} Lb_{-N} + P b_{-(N-1)} &\!\!\!= 0 \text{ to infinite order at $\{Êx_n = 0 \}$}. 
\end{array} \right.
\end{align}
We will additionally arrange that 
\begin{align} \label{bzeroone}
\left\{ \begin{array}{rl}
b_0(x',0) &\!\!\!= \chi(x'), \\[5pt]
b_{-j}(x',0) &\!\!\!= 0 \text{ for $1 \leq j \leq N$},
\end{array} \right.
\end{align}
and that each $b_{-j}$ is compactly supported so that 
\begin{equation} \label{bsupport}
\mathrm{supp}(b_{-j}) \subset Q_{\eps} := \{Ê\abs{x'} < \eps, \ 0 \leq x_n < \eps \}
\end{equation}
for some fixed $\eps > 0$.

To find $b_0$, we prescribe $b_0(x',0), \p_n b_0(x',0), \p_n^2 b_0(x',0)$, $\ldots$ successively and use the Borel summation lemma to construct $b_0$ with this Taylor series at $\{Êx_n = 0 \}$. We first set $b_0(x',0) = \chi(x')$. Writing $\eta := \nabla \cdot (\gamma \nabla \Phi)$, we observe that 
\[
Lb_0|_{x_n=0} = 2 \gamma (\xi_0' \cdot \nabla_{x'} b_0 + i\p_n b_0) + \eta b_0|_{x_n=0}. 
\]
Thus, in order to have $Lb_0|_{x_n=0} = 0$ we must have 
\[
\p_n b(x',0) = -\frac{1}{2i \gamma(x',0)} \left[ 2 \gamma(x',0) \xi_0' \cdot \nabla_{x'} b_0 + \eta b_0 \right] \Big|_{x_n=0}.
\]
We prescribe $\p_n b(x',0)$ to have the above value (which depends on the already prescribed quantity $b(x',0)$). Next we compute 
\[
\p_n (Lb_0)|_{x_n=0} = 2 \gamma i \p_n^2 b_0 + Q(x', b_0(x',0), \p_n b_0(x',0))
\]
where $Q$ depends on the already prescribed quantities $b_0(x',0)$ and $\p_n b_0(x',0)$. We thus set 
\[
\p_n^2 b_0(x',0) = -\frac{1}{2i\gamma(x',0)} Q(x', b_0(x',0), \p_n b_0(x',0)),
\]
which ensures that $\p_n (Lb_0)|_{x_n=0} = 0$. Continuing in this way and using Borel summation, we obtain a function $b_0$ so that $Lb_0 = 0$ to infinite order at $\{ x_n = 0 \}$. The other equations in \eqref{lbone} are solved in a similar way, which gives the required functions $b_{-1}, \ldots, b_{-N}$. In the construction, we may arrange so that \eqref{bzeroone} and \eqref{bsupport} are valid.

If $\Phi$ and $b_{-j}$ are chosen in the above way, then \eqref{p_eilambdaphi_formulatwo} implies that 
\[
P(e^{i\lambda \Phi} b) = e^{i\lambda \Phi}\left[ \lambda q_1(x)   + \sum_{j=0}^{N} \lambda^{-j} q_{-j}(x) + \lambda^{-N} Pb_{-N} \right]
\]
where each $q_j(x)$ vanishes to infinite order at $x_n = 0$ and is compactly supported in  $Q_{\eps}$. Thus, for any $k \geq 0$ there is $C_k > 0$ so that $\abs{q_j} \leq C_k x_n^k$ in $Q_{\eps}$, and consequently 
\[
\abs{P(e^{i\lambda \Phi} b)} \leq e^{-\lambda \im(\Phi)} \left[ \lambda C_k x_n^k + C \lambda^{-N} \right].
\]
Since $\im(\Phi) = x_n$ in $Q_{\eps}$ we have 
\begin{align*}
\norm{P(e^{i\lambda \Phi} b)}_{L^2(\Omega)}^2 &\leq C_k \int_{Q_{\eps}} e^{-2\lambda x_n} \left[ \lambda^2 x_n^{2k} + \lambda^{-2N} \right] \,dx \\
 &\leq C_k \int_{\abs{x'} < \eps} \int_0^{\infty} e^{-2 x_n} \left[ \lambda^{1-2k} x_n^{2k} + \lambda^{-1-2N} \right] \,dx_n \,dx'.
\end{align*}
Choosing $k=N+1$ and computing the integrals over $x_n$, we get that 
\[
\norm{P(e^{i\lambda \Phi} b)}_{L^2(\Omega)}^2 \leq C_N \lambda^{-2N-1}.
\]
It is also easy to compute that 
\[
\norm{e^{i\lambda \Phi} b}_{H^1(\Omega)} \sim \lambda^{1/2}.
\]
Thus, choosing $a = \lambda^{-1/2} b$, we have proved all the claims except \eqref{boundary_determination_limit}.

To show \eqref{boundary_determination_limit}, we observe that 
\[
\nabla v = e^{i\lambda \Phi} \left[ i\lambda (\nabla \Phi) a + \nabla a \right].
\]
Using a similar formula for $\tilde{v} = e^{i\lambda \Phi} \tilde{a}$ (where $\Phi$ is independent of the conductivity), we have 
\[
\mathrm{dist}(x,\p \Omega)^k f \nabla v \cdot \ol{\nabla \tilde{v}} = x_n^k f e^{-2\lambda x_n} \left[ \lambda^2 \abs{\nabla \Phi}^2 a \ol{\tilde{a}} + \lambda^1 [ \cdots ] + \lambda^0 [ \cdots ]  \right].
\]
Now $\abs{\nabla \Phi}^2 = 2$ and $a = \lambda^{-1/2} b$ where $\abs{b} \lesssim 1$, and similarly for $\tilde{a}$. Hence 
\begin{align*}
 &\lambda^k \int_{\Omega} \mathrm{dist}(x,\p \Omega)^k f \nabla v \cdot \ol{\nabla \tilde{v}} \,dx \\
 &= \lambda^{k+1} \int_{\mR^{n-1}} \int_0^{\infty} x_n^k e^{-2\lambda x_n} f \left[ 2 b \ol{\tilde{b}} + O_{L^{\infty}}(\lambda^{-1}) \right] \,dx_n \,dx'.
\end{align*}
We can change variables $x_n \to x_n/\lambda$ and use dominated convergence to take the limit as $\lambda \to \infty$. The limit is 
\[
c_k \int_{\mR^{n-1}} f(x',0) b(x',0) \ol{\tilde{b}(x',0)} \,dx' = c_k \int_{\mR^{n-1}} f(x',0) \abs{\chi(x')}^2 \,dx'
\]
where $c_k = 2 \int_0^{\infty} x_n^k e^{-2x_n} \,dx_n \neq 0$.

The proof is complete in the case when $x_0 = 0$ and $\p \Omega$ is flat near $0$. In the general case, we choose boundary normal coordinates $(x',x_n)$ so that $x_0$ corresponds to $0$ and $\Omega$ near $x_0$ locally corresponds to $\{ x_n > 0 \}$. The equation $\nabla \cdot (\gamma \nabla u) = 0$ in the new coordinates becomes an equation 
\[
\nabla \cdot (\gamma A \nabla u) = 0 \text{ in $\{ x_n > 0 \}$}
\]
where $A$ is a smooth positive matrix only depending on the geometry of $\Omega$ near $x_0$. The construction of $v$ now proceeds in a similary way as above, except that the equation \eqref{ptwo_phi_equation} for the phase function $\Phi$ can only be solved to infinite order on $\{ x_n = 0 \}$ instead of solving it globally in $\Omega$.
\end{proof}

\end{document}